\newcommand \footnoteONLYtext[1]
\let \mybackup \thefootnote
\let \thefootnote \relax
\let \thefootnote \mybackup
\let \mybackup \imareallyundefinedcommand
\newtheorem{definition}{Definition}[section]
\newtheorem{claim}{Claim}
\theoremstyle{plain}
\newtheorem{thm}{Theorem}
\newtheorem{lem}{Lemma}[section]
\newtheorem{cor}[lem]{Corollary}
\newtheorem{prop}{Proposition}
\newtheorem{remark}{Remark}
\numberwithin{equation}{section}
\numberwithin{figure}{section}
\numberwithin{table}{section}
\newcommand\keywords[1]{\it{Keywords}: #1}
\newcommand\msc[1]{\it{2010 Mathematics Subject Classification}:#1}
\renewcommand{\section}{\@startsection{section}{1}{0mm}
{-\baselineskip}{0.5\baselineskip}{\normalsize\bf\leftline}}
\renewcommand{\subsection}{\@startsection{subsection}{1}{0mm}
{-\baselineskip}{0.5\baselineskip}{\normalsize\bf\leftline}}
\title{\Large{\bf{{Lyapunov Exponent and Stochastic Stability for Infinitely Renormalizable Lorenz Maps}}}}
\author{{Haoyang Ji \ \ Qihan Wang}
\\
}
\date{}
\begin{document}

\maketitle

\vspace{-2cm}

\begin{abstract}
\noindent{ \bf{Abstract.}}
We prove that infinitely renormalizable contracting Lorenz maps with bounded geometry or the so-called {\it a priori bounds} satisfies the slow recurrence condition to the singular point $c$ at its two critical values $c_1^-$ and $c_1^+$. As the first application, we show that the pointwise Lyapunov exponent at $c_1^-$ and $c_1^+$ equals 0. As the second application, we show that such maps are stochastically stable.

\end{abstract}

\footnoteONLYtext{Date: \date{\today}}
\footnoteONLYtext{\msc{\rm{ 37E05} \rm{37H30}}}
\footnoteONLYtext{\keywords{\rm{Lorenz map, renormalization, physical measure, stochastic stability}}}
\section{Introduction}
\label{intro}

 In \cite{Lo} Lorenz studied the solution of the system of differential equations (1.1) in $\mathbb R^3$ originated by truncating  Navier-Stokes equations for modeling atmospheric conditions:
 \begin{align}
\dot{x} &= -10 x + 10y \nonumber \\
\dot{y} & = 28 x - y - xz\\
\dot{z} & = -\frac{8}{3} z + xy \nonumber .
\end{align}
This system exhibits the famous strange Lorenz attractor and has played an important role in the development of the subject of dynamical systems. Guckenheimer and Williams \cite{GW}, and also  Afra$\rm{\breve{i}}$movi$\rm{\check{c}}$-Bykov-Shilnikov \cite{ABS}, introduced the geometric Lorenz flow with the same qualitative behavior as the original Lorenz flow, in which it was supposed that the eigenvalues $\lambda_2< \lambda_1 < 0 < \lambda_3$ at the singularity of the flow satisfying the expanding condition $\lambda_1 + \lambda_3 >0$. In \cite{ACT} Arneodo, Coullet and Tresser began to study a model obtained in the same way just replacing the expanding condition by the contracting condition $\lambda_1 + \lambda_3 <0$. The general assumptions used to construct the geometric models also permit the reduction of the 3-dimension problem, first to a 2-dimensional Poincar\'e section and then to a one-dimensional map, the so-called Lorenz maps.

Hence from a topological viewpoint, a Lorenz map $f : I \setminus \{ c\} \to I$ is nothing else than an interval map with two monotone branches and a discontinuity $c$ in between. On both one sided neighborhoods of the discontinuity the Lorenz map equals $|x|^{\alpha}$ near the origin up to coordinate changes. The parameter $\alpha>0$ is the {\it critical exponent} which by construction equals the ratio of the absolute value between the stable and unstable eigenvalues. If $\alpha < 1$, then the derivative of $f$ at $c$ is infinite. Such maps are typically overall expanding and chaotic, and by this reason these maps are called expanding Lorenz maps. Since $\alpha<1$ holds in the situation of the classical Lorenz systems, expanding Lorenz maps has been studied widely and their dynamics is well understood. If $\alpha>1$, then $f$ is called contracting Lorenz maps. This case is significantly harder due to the interplay between contraction near the discontinuity and expansion outside.

The dynamics of smooth interval maps has been studied exhaustively in the last forty years, especially for unimodal maps. Critical points and critical values play fundamental roles in the study of interval dynamics. From this point of view, Lorenz maps are of hybrid type: these maps have a single critical point as unimodal maps, but two critical values as bimodal maps. The presence of both contraction and discontinuity means that many techniques from the theory of expanding maps and one-dimensional maps are not applicable. However the starting points should still be the refined theory of smooth one-dimensional dynamics, especially of  unimodal maps. The symbolic and topological dynamics of such Lorenz maps have been widely studied, see \cite{B, HS, KP}. The measurable dynamics was studied previously in \cite{CD, KP, Ro, Me1} among others. The first step towards a theory of Lorenz renormalization was taken by Martens and de Melo \cite{MM} who developed a combinatorial counterpart of unimodal renormalization. Further study in this direction can be found in \cite{G, MW, MW1, MW2, W}.

Over the last three decades there has been an increasing interest in stochastic stability. Uniformly expanding maps and uniformly hyperbolic systems are known to be stochastically stable \cite{Ki1}. For non-uniformly expanding interval maps, stochastic stability was previously studied in \cite{BV, BY, Ts} for Benedicks-Carleson-type maps, in \cite{S} under general summability condition, and even in \cite{LW} for unimodal maps with a wild attractor. For contracting Lorenz maps, it was proved in \cite{Me} that Rovella-like maps are stochastically stable in the strong sense of Baladi and Viana \cite{BV}. Such maps exhibit expansion away from a critical region with slow recurrence rate to it and hence admit absolutely continuous invariant measure.

In this paper we are concerned with infinitely renormalizable contracting Lorenz maps which have a global Cantor attractor and therefore have no non-uniformly expanding properties. Such maps possess bounded geometry or the so-called {\it a priori bounds} which guarantees the existence of the unique physical measure supported on the Cantor attractor. {\it A priori bounds} are one of the main ingredients in any study of renormalization and have been established for a large class of Lorenz maps with monotone combinatorics in \cite{MW, G}. We shall consider random perturbations of additive type. Given a map $f$, an $\epsilon$-{\it random (pseudo) orbit} is by definition a sequence $\{ x_n\}_{n=0}^{\infty}$ such that $|f(x_n) - x_{n+1}| \leq \epsilon$. Roughly speaking, stochastic stability means that when $\epsilon>0$ is small, for most of the $\epsilon$-random orbits $\{ x_n\}_{n=0}^{\infty}$, the asymptotic distribution $\lim_{n \to \infty} \frac{1}{n}\sum_{i=0}^{n-1} \delta_{x_i}$ is close to the physical measure of $f$. We will prove that infinitely renormalizable contracting Lorenz maps with bounded geometry are stochastically stable following the strategy of Tsujii \cite{Ts}. We check that such maps satisfy the {\it slow recurrence condition} to the singularity at its two critical values $c_1^-$ and $c_1^+$ and by this the Margulis-Pesin formula holds for the limit measure of random perturbations. As an application of slow recurrence, we also prove that the pointwise Lyapunov exponent equals 0 at $c_1^-$ and $c_1^+$.

This paper is organized as follows. In section 2 we state necessary results and backgrounds which will be used. The precise statements of results will be given in subsection 2.5. In section 3 we check the slow recurrence condition and then prove the pointwise Lyapunov exponent at $c_1^-$ and $c_1^+$ equals 0. The stochastic stability is proved in section 4 under the assumption of Tsujii's theorem.

\section{Preliminaries}

A $C^3$ interval map $f: [0, 1]\setminus \{ c\} \to [0, 1]$ with a discontinuity at $c \in (0, 1)$ is called a {\it Lorenz map} if $f(0) =0, f(1) =1$, $Df(x) >0$ for all $x \in [0, 1]\setminus \{ c\}$. The point $c$ is called the {\it singular point}. A Lorenz map has two critical values defined by $c_1^- = \lim_{x \to c^-}f(x)$ and $c_1^+ = \lim_{x \to c^+}f(x)$, thus implicitly thinking of $c^+$ and $c^-$ as distinct critical points. A Lorenz map is called {\it contracting} provided $Df(c^-) = Df(c^+) =0$. A contracting Lorenz map $f$, with singularity $c$, is called {\it non-flat} if there exists $u \in [0, 1], v \in [0, 1]$, $\alpha > 1$ and $C^3$ diffeomorphisms $\phi : [0, c] \to [0, u^{{1}/{\alpha}}]$ and $\psi : [c, 1] \to [0, v^{{1}/{\alpha}}]$ such that $\phi(c) = 0 =\psi(c)$, $\phi(0) = u^{{1}/{\alpha}}, \psi(1) =v^{{1}/{\alpha}}$ and 
\[ f(x) = \begin{cases}
u - (\phi(x))^{\alpha} & \mbox{ if } x < c\\
1 - v + (\psi(x))^{\alpha} & \mbox{ if } x > c.
\end{cases}\eqno{(1)}
\]
The parameter $\alpha$ is called the {\it critical exponent}, and we also call $c$ the {\it critical point}. Note that $u$ and $1-v$ are the two critical values of $f$.

A Lorenz map is called non-trivial if $c_1^+ < c < c_1^-$. Otherwise all points converge to some fixed point under iteration and for this reason $f$ is called trivial. Unless otherwise noted, all Lorenz maps are assumed to be nontrivial. In general, $c_k^{\pm}$ will denote points in the orbit of the critical values:
\[
c_k^{\pm} = \lim_{x \to c\pm}f^k(c), k \geq 1.
\]

The Schwarzian derivative of a $C^3$ diffeomorphism $h : J \to h(J)$ is denoted by
\[ S h (x) = \frac{D^3 h (x)}{D h (x)} - \frac{3}{2} \left( \frac{D^2 h (x)}{D h (x)}\right)^2 ( Dh (x) \neq 0).
\]
Throughout this article we will always assume that the Lorenz map $f$ has a non-flat critical point $c$ and is of class $C^3$ with negative Schwarzian derivative outside $c$. Furthermore, we may assume that the two fixed points $0$ and $1$ for $f$ are hyperbolic repelling to avoid trivial cases.

\subsection{Renormalization}

Given any interval $J \subset [0, 1]$, the first return map $R_J$ to $J$ for $f$ is defined as $R_J(x) = f^{k(x)}(x)$, for $x \in J \setminus \{c\}$, where $k(x)$ is the smallest positive integer such that $f^{k(x)}(x) \in J$.

\begin{definition}
A Lorenz map $f$ is called {\bf renormalizable} if there exists a closed interval $C$ such that ${\rm int}C \ni c$, $C \neq [0, 1]$, and such that the first return map to $C$ is affinely conjugate to a non-trivial Lorenz map. The interval $C$ is called the renormalization interval. Choose $C$ such that it is maximal with respect to these properties. The rescaled first return map of such $C \setminus \{c\}$ is called a {\bf renormalization} of $f$ and denoted $\mathcal R f$. 
\end{definition}

We will denote
\[ C^-= C \cap [0, c), C^+ = (c, 1],
\]
while the first return map will be denoted $\mathcal P f$ and referred to as the {\it pre-renormalization}. If $f$ is renormalizable, then there exist minimal positive integers $a$ and $b$ such that
\[ \mathcal P f(x) = \begin{cases}
f^{a+1} (x), &x \in C^-,\\
f^{b+1} (x), &x \in C^+.
\end{cases}
\]
Then, explicitly,
\[ \mathcal R f= A^{-1} \circ \mathcal P f \circ A
\]
where $A$ is the affine orientation-preserving rescaling of $[0, 1]$ to $C$. It follows that the left and right boundary points of $C$ are periodic points (of period $a+1$ and $b+1$) and, since $\mathcal R f$ is non-trivial, $C^- \subset f^{a+1}(C^-) \subset C$ and $C^+ \subset f^{b+1}(C^+) \subset C$. Note that $C$ is chosen maximal so that $\mathcal R f$ is uniquely defined. We will explain later why such a maximal interval $C$ exists.

\begin{remark}
We emphasize here that the renormalization is always assumed to be non-trivial. It is possible to define the renormalization operator for maps whose renormalization is trivial but we choose not to include these. Such maps can be thought of as degenerate and including them makes some arguments more difficult which is why they are excluded.
\end{remark}

A branch of $f^n$ is a maximal open interval $J$ on which $f^n$ is monotone (here maximality means that if $A$ is an open interval which properly contains $J$, then $f^n$ is not monotone on $A$). To each branch $J$ of $f^n$ we associate a word $\omega(J) = \sigma_0 \sigma_1 \ldots \sigma_{n-1}$ on symbols $\{ 0, 1 \}$ by 
\[ \sigma_j = \begin{cases}
0 & \mbox{if } f^j(J) \subset (0, c),\\
1 & \mbox{if } f^j(J) \subset (c, 1),
\end{cases}
\]
for $j = 0, 1, \ldots, n-1$.

Next, we wish to describe the combinatorial information encoded in a renormalizable map. The intervals $f^i(C^-), 1 \leq i \leq a$, are pairwise disjoint and disjoint from $C$. So are the intervals $f^i(C^+), 1 \leq i \leq b$. Let $L$ be the branch of $f^{a+1}$ containing $C^-$ and $R$ be the branch of $f^{b+1}$ containing $C^+$. Then we can associate the forward orbits of $C^-$ and $C^+$ to a pair of words $\omega = (\omega^-, \omega^+)$ which will be called the {\it type} of renormalization, where $\omega^- = \omega(L)$ and $\omega^+ = \omega(R)$. In this situation we say that $f$ is $\omega$-renormalizable.

Since $f$ is non-trivial, let $0 < p < c < q < 1$  be the two preimages of $c$. Then $[p, c)$ and $(c, q]$ are maximal intervals adjacent to $c$ such that $f^2$ is monotone on them. Any renormalization interval should be contained in $[p, q]$. In case that $f$ is renormalizable,  let $C'$ be any renormalization interval with renormalization type $\omega'$. By \cite{MM}[Lemma 3.1], the endpoints of $C'$ are hyperbolic repellers since $f$ has negative Schwarzian derivative. Assume $C' = [p', q']$ and $p'$ has period $k$. Then $f^k |[p', c)$ has no other fixed points since for otherwise one can get a contradiction easily by Minimum Principle \cite{MS}[Chpter II, Lemma 6.1]. This shows that for any renormalization type, one can have only one renormalization interval (under the assumption of negative Schwarzian derivative). Now assume that $f$ has two different renormalization type $\omega_1 = (\omega_1^-, \omega_1^+)$ and $\omega_2 = (\omega_2^-, \omega_2^+)$. Let $C_1$ and $C_2$ be the corresponding renormalization interval. By \cite{MM}[Lemma 3.2], we have that $\omega_2^-$ and $\omega_2^+$ are formed by concatenating the words  $\omega_1^-, \omega_1^+$, or vice versa. In particular, 
\[
\omega_2^- = \omega_1^- \omega_1^+ \cdots, \ 
\omega_2^+ = \omega_1^+ \omega_1^- \cdots
\]
and $|\omega_2^-|, |\omega_2^+| \geq |\omega_1^-| + |\omega_1^+|$. Furthermore, $C_1 \subset C_2$ or $C_2 \subset C_1$. So consider the smallest renormalization type, we get a maximal renormalization interval.

Let 
\[ \overline{\omega} = (\omega_0, \omega_1, \ldots) \in \prod_{n \in \mathbb N} \bigotimes(\{0,1 \}^{a_n +1} \times \{0,1 \}^{b_n +1}).
\]
If $\mathcal R^n f$ is $\omega_n$-renormalizable for all $n \in \mathbb N$, then $f$ is called infinitely renormalizable of combinatorial type $\overline{\omega}$. The set of $\omega$-renormalizable maps will be denoted by $\mathcal L_{\omega}$, and the set of infinitely renormalizable maps $f$ such that $\mathcal R^n f$ is $\omega_n$-renormalizable will be denoted by $\mathcal L_{\overline{\omega}}$, $\overline{\omega} = (\omega_0, \omega_1, \ldots) $, with $n$ finite or infinite. If $\overline{\omega}$ is such that $|\omega_n^{\pm}|<B$, $n = 0, 1, \ldots$, for some $0 < B < \infty$, we say that $\overline{\omega}$ is of {\it bounded type}, and $f \in \mathcal L_{\overline{\omega}}$ has {\it bounded combinatorics}.

The combinatorics 
\[ \omega = (0 \overbrace{1 \ldots 1}^{a}, 1 \overbrace{0 \ldots 0}^b)
\]
will be called {\bf monotone}\footnote{Note that the combinatorial description of Lorenz map is simplified due to the fact that Lorenz maps are increasing on each branch so there is no need to introduce permutations as in the case of unimodal maps.} and we also say that $f$ is $(a, b)$-renormalizable. An infinitely renormalizable map is said to be of combinatorial type $\{(a_n, b_n) \}_{n=1}^{\infty}$ if $\mathcal R^{n-1}f$ is $(a_n, b_n)$-renormalizable, for all $n \geq 1$.

\subsection{Covers}

Let $f$ be an infinitely renormalizable map of any combinatorial type. There exists a nested sequence of intervals $C_1 \supset C_2 \supset \ldots \supset C_n \ldots \ni c$ on which the corresponding first return map is again a (non-trivial) Lorenz map. The singular point $c$ splits each $C_n$ into two subintervals denoted by $C_n^- = C_n \cap [0, c)$ and $C_n^+ = C_n \cap (c, 1]$. Let $S_n^-$ and $S_n^+$ denote the first return times of $C_n^-$ and $C_n^+$ to $C_n$, respectively. In particular, the sequences $\{ S_n^-\}_{n \geq 1}$ and $\{ S_n^+\}_{n \geq 1}$ grow at least exponentially fast when $n$ tends to $\infty$:
\begin{equation}
S_n^-, S_n^+ \geq 2^n.
\end{equation}

The $n$-th {\it level cycles} $\Lambda_n^-$ and $\Lambda_n^+$ of $f$ are the following collections of closed intervals
\[ \Lambda_n^- = \bigcup_{k=0}^{S_n^- -1}\overline{f^k(C_n^-)},
\]
and, similarly, for $\Lambda_n^+$. Let $\Lambda_0^- = [0, c]$ and $\Lambda_0^+ = [c, 1]$ and let $\Lambda_n = \Lambda_n^- \cup \Lambda_n^+ $, for all $n \geq 0$. The intervals $\{ \overline{f^k(C_n^-)}\}_{k=0}^{S_n^- -1}$ and $\{ \overline{f^k(C_n^+)}\}_{k=0}^{S_n^+ -1}$ satisfy a disjointness property expressed by the following lemma. Intuitively, for a fixed $n$ these sets have pairwise disjoint interiors except that if they overlap at some time, then all remaining intervals are contained in the same branch and follow the same orbit.

\begin{lem}[\cite{W}, Lemma 2.3.4]
There exists $k_n \geq 0$ such that $\overline{f^{S_n^--i}(C_n^-)}$ and $\overline{f^{S_n^+ - i}(C_n^+)}$ has non-empty intersection alone interior points for $0 \leq i \leq k_n$.
\end{lem}

\begin{proof}
Since the endpoints of $C_n$ are periodic points, each $C_n$ is a nice interval in the sense that $C \ni c$ and the orbit of the boundary of $C_n$ is disjoint from the interior of $C_n$. Since $S_n^-$ and $S_n^+$ are first return times, by Proposition 3.5 in \cite{MW}, there exists a interval $U_n \ni c_1^-$ such that $f^{S_n^- -1} : U_n \to C_n$ is monotone and onto. In particular, $U_n, f(U_n), \cdots, f^{S_n^- -1}(U_n)$ have pairwise disjoint interiors.  Similarly, there exists $V_n \ni c_1^+$ such that $f^{S_n^+ -1} : V_n \to C_n$ is monotone and onto.

Let $k_n \geq 0$ be the largest integer such that  $f^{S_n^- -1 -i}(U_n) = f^{S_n^+ -1 -i}(V_n)$ for all $0 \leq i \leq k_n$. Such an integer $k_n$ exists since $f^{S_n^- -1}(U_n) = C_n = f^{S_n^+ -1}(V_n)$. Then for each $0 \leq i \leq k_n$, $\overline{f^{S_n^--i}(C_n^-)}$ and $\overline{f^{S_n^+ - i}(C_n^+)}$ are both contained in $f^{S_n^- -1 -i}(U_n)$. Since $f^i : f^{S_n^- -1 -i}(U_n) \to C_n$ is monotone and onto, and each renormalization is assumed to be non-trivial, the interiors of $\overline{f^{S_n^--i}(C_n^-)}$ and $\overline{f^{S_n^+ - i}(C_n^+)}$ must have non-empty intersection.
\end{proof}

\begin{remark}
If $f$ has monotone combinatorics, then $k_n = 0$. If $k_n \geq 1$, then 
\[
\Lambda_n = \bigcup_{k=0}^{S_n^- -k_n-1}\overline{f^k(C_n^-)} \cup \bigcup_{k=0}^{S_n^+ -k_n-1}\overline{f^k(C_n^+)} \cup \bigcup_{k=S_n^- -k_n}^{S_n^- -1} f^k(U_n)
\]
such that the interiors of elements in $\{ \Lambda_n \}$ are pairwise disjoint.
\end{remark}

Components of $\Lambda_n$ are called intervals of generation $n$ and components of $\Lambda_{n-1}\setminus \Lambda_n$ are called gaps of generation $n$.  Let $J \subset I$ be intervals of generation $n+1$ and $n$, respectively, and let $G \subset I$ be a gap of generation $n+1$. 
The intersection of all levels is denoted by
\[
\Lambda  = \bigcap_{n \geq 0} \Lambda_n.
\]

\begin{definition}
An infinitely renormalizable contracting Lorenz map $f$ is called having {\bf bounded geometry} if there exist constants $K>1$ and $0 < \mu < \lambda < 1$ independent of $n$ such that for all $n \geq 1$, 
\[ 
\mu< \frac{|J|}{|I|},  \frac{|G|}{|I|} < \lambda 
\]
and
\[
\frac{|C_n|}{|C_n^{\pm}|} \leq K.
\]
\end{definition}

\begin{definition}
Let $\mathcal L_B$ denote the set of infinitely renormalizable contracting Lorenz maps with bounded geometry.
\end{definition}

\begin{lem}
Suppose $f \in \mathcal L_B$, then $f$ has bounded combinatorics.
\end{lem}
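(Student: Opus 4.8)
The plan is to bound, at each renormalization level, the number of intervals of generation $n+1$ that can lie inside the interval $C_n$, to recognize this number as $|\omega_n^-|$ (from the left cycle) and $|\omega_n^+|$ (from the right cycle), and then to let the bounded-geometry estimate $|J|>\mu|I|$ do the rest. Since bounded combinatorics is exactly the assertion that $|\omega_n^{\pm}|<B$ for all $n$ and some $B=B(f)$, a uniform upper bound on $|\omega_n^{\pm}|$ is all that is needed.

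Fix $n\ge 1$ and let $F_n$ denote the first return map of $f$ to $C_n$; it maps $C_n\setminus\{c\}$ into $C_n$, is affinely conjugate to $\mathcal R^n f$, and is therefore $\omega_n$-renormalizable with renormalization interval $C_{n+1}$, so that (by the definition of the type) the first return time of $C_{n+1}^-$ to $C_{n+1}$ under $F_n$ equals $|\omega_n^-|$, and likewise $|\omega_n^+|$ for $C_{n+1}^+$. I would then follow the orbit $C_{n+1}^-,\,F_n(C_{n+1}^-),\,\dots,\,F_n^{|\omega_n^-|-1}(C_{n+1}^-)$. These $|\omega_n^-|$ intervals are pairwise disjoint — by the pre-renormalization property recalled in the preliminaries for $f$, applied here to $F_n$, the iterates preceding the first return to $C_{n+1}$ miss $C_{n+1}$ and are mutually disjoint; each is contained in $C_n$ because $F_n(C_n\setminus\{c\})\subset C_n$, and in fact each lies in $C_n^-$ or in $C_n^+$ since it avoids the neighborhood $C_{n+1}$ of $c$; and each has generation $n+1$, since it equals $\overline{f^{k}(C_{n+1}^-)}$ for some $0\le k<S_{n+1}^-$ and hence belongs to $\Lambda_{n+1}^-$.

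Bounded geometry now closes the argument: each such interval $J$ of generation $n+1$ has generation-$n$ parent $C_n^-$ or $C_n^+$, so $|J|>\mu|C_n^{\pm}|\ge\mu|C_n|/K$; since the $|\omega_n^-|$ intervals have pairwise disjoint interiors and lie inside $C_n$, summing lengths gives $|\omega_n^-|\cdot\mu|C_n|/K<|C_n|$, that is $|\omega_n^-|<K/\mu$, and symmetrically $|\omega_n^+|<K/\mu$. This holds for every $n\ge 1$, while $|\omega_0^{\pm}|$ is a fixed finite number for the given map $f$; hence $|\omega_n^{\pm}|<B$ for all $n\ge 0$ with $B=\max\{K/\mu,\ |\omega_0^-|+1,\ |\omega_0^+|+1\}$, so $f$ has bounded combinatorics. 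The one step that requires real care — bookkeeping rather than a genuine obstacle — is the last sentence of the preceding paragraph together with the identification of $|\omega_n^{\pm}|$ as the $F_n$-return time: one must carefully unwind the definitions of the type, of the pre-renormalization, and of the cycles $\Lambda_{n+1}$, and keep track of the affine rescalings relating $\mathcal R^n f$ to $F_n$ on $C_n\subset[0,1]$, in order to be sure that this $F_n$-orbit really supplies $|\omega_n^{\pm}|$ distinct elements of $\Lambda_{n+1}$ all sitting inside $C_n$. After that, the conclusion is a one-line length count.
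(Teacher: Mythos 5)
Your argument is correct and is essentially the paper's proof: both count the pairwise disjoint generation-$(n+1)$ intervals lying inside $C_n$ (whose number is comparable to $|\omega_n^-|+|\omega_n^+|$) and bound that count using the bounded-geometry lower bound $|J|/|I|>\mu$, yielding a uniform bound on $|\omega_n^{\pm}|$. The only cosmetic difference is that you invoke the constant $K$ to compare with $|C_n|$ and spell out the identification of $|\omega_n^{\pm}|$ with return times of the first return map, whereas the paper pigeonholes the intervals into $C_n^-$ or $C_n^+$ and gets the bound $2/\mu+1$ directly.
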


\begin{proof}
The number of intervals of generation $n+1$ which are contained in $C_n = C_n^- \cup C_n^+$ are exactly $\omega_n^- + \omega_n^+-1$. Assume that $C_n^-$ contains at least $(\omega_n^- + \omega_n^+-1)/2$ intervals of generation $n+1$, then $\omega_n^- + \omega_n^+ \leq 2/\mu +1$. This finishes the proof.

\end{proof}

In interval dynamics, the property bounded geometry (or called the {\it a priori bounds}) was first proved by Sullivan for infinitely renormalizable unimodal maps with bounded combinatorics. Bounded geometry is also the fundamental result in the study of critical circle maps. For background and history, see \cite{MS} and the references therein.

The class $\mathcal L_B$ is non-empty. The first result was given by Martens and Winckler in \cite{MW}, where the {\it a priori bounds} was proved for special monotone combinatorics with the return time of one branch being large and much larger than the return time of the other branch. Roughly speaking, \cite{MW} proved the {\it a priori bounds} for monotone combinatorial types with the following return time:
\begin{equation}
 [\alpha] \leq |\omega^-| -1 \leq [2 \alpha -1], \ b_- \leq |\omega^+| -1 \leq b_+,
\end{equation}
where $b_-$ is sufficiently large and $b_+$ depends on the choice of $b_-$. After that, Gaidashev in \cite{G} proved the {\it a priori bounds} for a different class of monotone combinatorial types with sufficiently flat critical point. The range of the allowed length of the combinatorics is close to 
\[ 
\left(\frac{\ln 2 }{ \ln \alpha} +1\right) \alpha < |\omega^-|, |\omega^+| < 2 \alpha.
\]
In comparison to (2.2), the length of combinatorics is similar to the one for the shorter branch in \cite{MW}.

\begin{remark}
We emphasize here that our results (including Theorem 1,2 \& 3) hold under the assumption of {\bf bounded geometry} or the so-called {\it a priori bounds}, which implies bounded combinatorics by Lemma 2.1. Up to now, the bounded geometry property has been verified for a large class of Lorenz maps with monotone combinatorics.  {\color{red} It is proved recently by Martens and Winckler \cite{MW2} that there exist contracting Lorenz maps with bounded combinatorics and no a priori bounds.}
\end{remark}

\subsection{Physical measures}

Let $I$ be an interval, let $g : I \to I$ and let $\delta_x$ denote the Dirac measure at $x$. An $g$-invariant measure $\mu : I \to \mathbb R$ is called a {\it physical measure} if its {\it basin}
\[
B(\mu)= \{ x \in I :  \frac{1}{n} \sum_{k=1}^n \delta_{g^k(x)} \to \mu \mbox{ as } n \to \infty \mbox{ in the weak star topology} \}
\]
has positive Lebesgue measure.

Suppose $f \in \mathcal L_B$,  let $\mathcal O_f$ be the closure of the orbits of the critical values. The following proposition was proved in \cite{MW}, see also \cite{B, G, W}.

\begin{prop}
Assume $f \in \mathcal L_B$. Then:
\begin{enumerate}
\item[(1)] $\mathcal O_f = \Lambda$ is a minimal Cantor set;
\item[(2)] $\mathcal O_f$ has zero Lebesgue measure;
\item[(3)] $f| \mathcal O_f$ is uniquely ergodic;
\item[(4)] $\mathcal O_f$ is the global attractor of $f$ whose basin of attraction has full Lebesgue measure.
\end{enumerate}
\end{prop}

\begin{proof}
The proof is identical to \cite{MW}[Theorem 5.5]. Firstly, By Singer's theorem\footnote{Singer's theorem is stated for unimodal maps but the statement and proof can easily be adapted to contracting Lorenz maps.} \cite{MS}[Theorem 2.7], the immediate basin of any periodic attractor contains at least one of the critical values (recall that we assume the endpoints of $I$ to be repelling). Since $f$ is infinitely renormalizable, the critical orbits have subsequences which converge on the singular point and the critical values are not periodic,  $f$ has no periodic attractors.

Next we show that $\mathcal O_f = \Lambda$. Clearly $\mathcal O_f \subset \Lambda$ since the critical values are contained in $\overline{f(C_n^-)} \cup \overline{ f(C_n^+)}$ for each $n$. By bounded geometry assumption, $|\Lambda_{n+1}| < \lambda |\Lambda_n|$ so the lengths of the intervals of generation $n$ tend to 0 as $n \to \infty$. Hence $\mathcal O_f = \Lambda$.

A standard argument demonstrates that $\mathcal O_f$ is a Cantor set of measure 0 (since $\lambda <1$) and of Hausdorff dimension in $(0, 1)$.

By Lemma 2.2, $f$ has bounded geometry. So the unique ergodicity follows from a theorem due to Gambaudo and Martens \cite{GM}. The original proof in \cite{GM} was stated for continuous maps. For an adaption to contracting Lorenz maps, see \cite{W}[Theorem 2.3.1].

Finally, by Proposition 3.7 and Theorem 3.10 in \cite{MW}, $f$ has no wandering intervals and almost all points are attracted to $\mathcal O_f$.

\end{proof}

\begin{remark}
A {\bf wandering interval} of $f$ is an open interval $J$ such that $f^i(J)$ are pairwise disjoint, for all $i \geq 1$, and the $\omega$-limit set of $J$ is not a single periodic orbit. The non-existence of wandering intervals for $C^3$ contracting Lorenz maps with negative Schwarzian derivative was established by Cui and Ding \cite{CD} under non-uniformly hyperbolic condition and by Martens and Winckler \cite{MW} under weak Markov property. It is conjectured by Martens and de Melo in \cite{MM} that if $f$ has wandering intervals then $f$ has Cherry attractor.
\end{remark}

By Proposition 1, $f$ has a unique physical measure $\mu$ supported on $\mathcal O_f$. Since
\[
\mu\left( \cup_{k=0}^{S_n^- -1} (f^k(C_n^- )\cap \mathcal O_f) \right) \leq 1
\]
and 
\[
\mu (f^k(C_n^-) \cap \mathcal O_f) \geq \mu(C_n^- \cap \mathcal O_f) \mbox{ for all } 1 \leq k \leq S_n^- -1,
\]
we have 
\[
\mu(C_n^- \cap \mathcal O_f) \leq 1/S_n^- \mbox{ and similarly, }\mu(C_n^+ \cap \mathcal O_f) \leq 1/S_n^+.
\]
Furthermore, 
\begin{equation}
\mu(C_n \cap \mathcal O_f) \leq 2/S_n \mbox{ where } S_n = \min\{ S_n^-, S_n^+ \}.
\end{equation}


\subsection{Statement of results}

The {\it pre-orbit} of a point $x \in [0, 1]$ is the set $O_f^-(x) : = \bigcup_{n \geq 0} f^{-n}(x)$. For a point $x \in [0, 1] \setminus O_f^-(c)$, denote the forward orbit of $x$ by $O_f^+(x) = \{ f^j(x); j \geq 0\}$. And, if there exists $p \geq 1$ such that $f^p(c^-) =c $, where $p$ is the smallest positive integer with this property, we define $O_f^+(c^-) = \{f^j(c^-); 1 \leq j \leq p \}$. Otherwise we define $O_f^+(c^-) = \{ f^j(c^-): j \geq 0\}$. Similarly we define $O_f^+(c^+)$.

Given any point $x \in [0,1] \setminus O_f^-(c) $, we say that $x$ satisfies the {\it slow recurrence} condition to $c$ provided
\[ \lim_{\delta \to 0} \liminf_{n \to \infty} \frac{1}{n} \sum_{\substack{ 0 \leq i < n \\ f^i(x) \in \mathcal C_{\delta} }} \log d(f^i(x), c) = 0.
\]
Here $d(x, c) : =|x - c|  $ and $\mathcal C_{\delta} : =  (c - \delta, c) \cup (c,  c+ \delta)$ is a punctured neighborhood of the critical point.

\begin{thm}
Suppose $f \in \mathcal L_B$, then its two critical values $c_1^-$ and $c_1^+$ satisfy the slow recurrence condition to $c$. 
\end{thm}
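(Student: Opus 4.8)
The plan is to reduce the slow recurrence condition at $c_1^-$ to two quantitative inputs that are available for $f\in\mathcal L_B$: the return times grow exponentially, $S_n^-,S_n^+\ge 2^n$, and the half-intervals shrink exponentially, $|C_n^\pm|\ge \kappa\,\mu^{n}$ for some constant $\kappa>0$ (the latter because $\overline{C_n^\pm}$ is an interval of generation $n$ contained in the generation-$(n-1)$ interval $\overline{C_{n-1}^\pm}$, so bounded geometry forces $|C_n^\pm|>\mu\,|C_{n-1}^\pm|$). I will prove the statement for $c_1^-$; the argument for $c_1^+$ is the mirror image, with $c^-$ and $S_n^-$ replaced by $c^+$ and $S_n^+$. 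Throughout, the orbit of $c_1^-$ lies in the minimal Cantor set $\mathcal O_f$, which contains no periodic points and in particular never hits $c$, so that $c_1^-\notin O_f^-(c)$ and the slow recurrence condition is meaningful for it.

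\textbf{Step 1: how often the orbit of $c_1^-$ visits $C_n$.} First I would locate the orbit $f^t(c_1^-)=c_{t+1}^-$ inside the cycle structure. Since $c^-$ is the right endpoint of $C_n^-$, the point $c_j^-=f^j(c^-)$ is a boundary point of the $(j{+}1)$-st component $\overline{f^j(C_n^-)}$ of $\Lambda_n^-$ for $0\le j\le S_n^--1$; by the combinatorial description these components are disjoint from $C_n$ for $1\le j\le S_n^--1$, and since the orbit of $c_1^-$ avoids the periodic orbit $\partial C_n$, we get $c_j^-\notin C_n$ for $1\le j\le S_n^--1$, whereas $c_{S_n^-}^-\in C_n$ by the renormalization inclusion $C_n^-\subset f^{S_n^-}(C_n^-)\subset C_n$. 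Hence the orbit of $c_1^-$ meets $C_n$ for the first time exactly at iterate $S_n^--1\ge 2^n-1$; each visit occupies a single iterate (as $f(C_n^\pm)$ is disjoint from $C_n$), and consecutive visits are at least $S_n:=\min\{S_n^-,S_n^+\}\ge 2^n$ iterates apart (the first-return time of $C_n^\pm$ to $C_n$ equals $S_n^\pm$). Consequently, for every $N\ge 1$,
\[
\#\{\,0\le i<N:\ f^i(c_1^-)\in C_n\,\}=0 \ \text{ if } 2^n\ge N+1,\qquad\text{and}\qquad \#\{\,0\le i<N:\ f^i(c_1^-)\in C_n\,\}\le \frac{N}{2^n}+1 .
\]

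\textbf{Step 2: estimating the Birkhoff sum.} Fix $\delta>0$ and set $M(\delta):=\min\{n\ge1:\ \min\{|C_n^-|,|C_n^+|\}<\delta\}$, so $M(\delta)\to\infty$ as $\delta\to0$. If $f^i(c_1^-)\in\mathcal C_\delta$ then $f^i(c_1^-)\ne c$, so $f^i(c_1^-)\in C_n\setminus C_{n+1}$ for a unique $n$; inspecting which side of $c$ the point lies on, membership in $\mathcal C_\delta$ forces $\min\{|C_{n+1}^-|,|C_{n+1}^+|\}<\delta$, hence $n\ge M(\delta)-1$. For such $n$, $d(f^i(c_1^-),c)\ge\min\{|C_{n+1}^-|,|C_{n+1}^+|\}\ge\kappa\mu^{n+1}$, and since this distance is $<1$ (as $C_n\subset C_1\subsetneq[0,1]$ once $\delta$ is small enough that $M(\delta)\ge2$), we obtain $|\log d(f^i(c_1^-),c)|\le w_n$ with $w_n:=-\log(\kappa\mu^{n+1})=O(n)$. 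Combining with Step 1,
\[
\sum_{\substack{0\le i<N\\ f^i(c_1^-)\in\mathcal C_\delta}}\bigl|\log d(f^i(c_1^-),c)\bigr|\ \le\ \sum_{M(\delta)-1\le n<\log_2(N+1)}\Bigl(\frac{N}{2^n}+1\Bigr)w_n\ \le\ N\sum_{n\ge M(\delta)-1}\frac{w_n}{2^n}\ +\ \sum_{0\le n<\log_2(N+1)}w_n .
\]
Put $\rho(\delta):=\sum_{n\ge M(\delta)-1}w_n 2^{-n}$; since $\sum_n w_n 2^{-n}<\infty$, $\rho(\delta)\to0$ as $\delta\to0$, while the last sum is $O\bigl((\log N)^2\bigr)=o(N)$. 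Dividing by $N$ and letting $N\to\infty$ gives $\limsup_{N\to\infty}\frac1N\sum_{0\le i<N,\ f^i(c_1^-)\in\mathcal C_\delta}|\log d(f^i(c_1^-),c)|\le\rho(\delta)$. Since every term $\log d(f^i(c_1^-),c)$ is negative, this yields $-\rho(\delta)\le\liminf_{N\to\infty}\frac1N\sum_{0\le i<N,\ f^i(c_1^-)\in\mathcal C_\delta}\log d(f^i(c_1^-),c)\le 0$, and letting $\delta\to0$ gives the slow recurrence condition at $c_1^-$, hence (by the mirror argument) at $c_1^+$.

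\textbf{The main obstacle.} The one genuinely delicate point is Step 1 — establishing that the critical value's orbit runs along the boundaries of the cycle components, so that it really does stay out of $C_n$ until the return iterate $S_n^--1$. This is exactly what makes the visit count simultaneously vanish for $2^n>N$ and be $O(N/2^n)$ otherwise, and hence prevents a deep excursion toward $c$ from contributing a large negative term to the Birkhoff sum. Granting this, the remainder is routine summation of a geometric-type series, using only $S_n^\pm\ge 2^n$ and the bounded-geometry size estimates.
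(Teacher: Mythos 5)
Your proposal is correct and follows essentially the same route as the paper: your Step 1 is the paper's Lemma 3.2 (first entry to $C_n$ at time $S_n^- -1$ and gaps of at least $S_n\ge 2^n$ between visits), and your Step 2 reproduces the paper's decomposition of $\mathcal C_\delta$ into the annuli $C_n\setminus C_{n+1}$ together with the bounded-geometry lower bound $|C_n^\pm|\gtrsim \mu^n$, summing $\sum_{n\ge k_0} O(n)2^{-n}\to 0$. The only differences are bookkeeping (you carry the ``$+1$'' visit as an $O((\log N)^2)$ term where the paper absorbs it into the factor $(n+1)/S_k$), so no further changes are needed.
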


The Lyapunov exponent at $x \in [0,1] \setminus O_f^-(c)$, denoted $\chi_f(x)$, is given by 
\[ \chi_f(x) = \lim_{n \to \infty} \frac{1}{n} \log Df^n(x),
\]
provided the limit exists. Otherwise one can consider the upper Lyapunov exponent
\[\overline\chi_f(x) = \limsup_{n \to \infty} \frac{1}{n} \log Df^n(x).
\]
For an $f$-invariant Borel probability measure $\nu$, its Lyapunov exponent, denoted $\chi_{\nu}(f)$, is given by \[
\chi_{\nu}(f) = \int \log Df d \nu.\]

\begin{thm}
Suppose $f \in \mathcal L_B$, let $\mu$ be the unique physical measure of $f$. Then $\chi_f(c_1^-)= \chi_f(c_1^+)=0$. Moreover, $ \chi_{\mu} (f) =  0$.
\end{thm}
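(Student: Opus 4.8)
# Proof Proposal for Theorem 2

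\textbf{Overall strategy.} The plan is to deduce the vanishing of the Lyapunov exponents from Theorem~1 (slow recurrence) together with the negative Schwarzian derivative and the non-flatness of $f$. There are two kinds of statements to prove: the pointwise statement $\chi_f(c_1^\pm)=0$ at the two critical values, and the measure-theoretic statement $\chi_\mu(f)=0$ for the physical measure. I would treat the lower bound $\chi_f(c_1^\pm)\geq 0$ and the upper bound $\chi_f(c_1^\pm)\leq 0$ separately, since they use different inputs, and then obtain $\chi_\mu(f)=0$ by an ergodic-theoretic averaging argument (essentially integrating the pointwise information over $\mathcal O_f$, or invoking unique ergodicity of $f|\mathcal O_f$).

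\textbf{The lower bound $\chi_f(c_1^\pm)\geq 0$.} This is the part where slow recurrence enters directly. Write the derivative $|Df^n(c_1^-)|$ along the orbit as a product over the $n$ steps. The key geometric point for contracting Lorenz maps with negative Schwarzian derivative is a Koebe-type / cross-ratio estimate: away from a $\delta$-neighborhood $\mathcal C_\delta$ of the singular point, the return maps have derivative bounded below by a constant depending on $\delta$, because the branches of high iterates are diffeomorphisms with controlled distortion (negative Schwarzian gives the minimum principle for $|Df^k|$ on a branch, and the bounded-geometry hypothesis controls the relevant scales). More precisely, I would split the orbit $\{f^i(c_1^-)\}_{0\le i<n}$ into the times when $f^i(c_1^-)\in\mathcal C_\delta$ and the times when it is outside. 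On the outside set, $\log|Df(f^i(c_1^-))|\geq -\log C(\delta) > -\infty$ uniformly; on the inside set, near $c$ one has from the non-flat normal form $(1)$ that $\log|Df(x)| \geq (\alpha-1)\log d(x,c) - \mathrm{const}$. Summing, dividing by $n$, and letting $n\to\infty$ then $\delta\to0$, the contribution from $\mathcal C_\delta$ is controlled by $(\alpha-1)\cdot\big(\text{slow recurrence sum}\big)$, which tends to $0$ by Theorem~1, while the outside contribution, being bounded below by $-\log C(\delta)$ times a frequency $\leq 1$... here one must be a little careful: $\log C(\delta)\to\infty$ as $\delta\to 0$. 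The honest argument is that one fixes $\delta$, gets $\liminf_n \frac1n\log|Df^n(c_1^-)| \geq -\big(\text{frequency of visits to }\mathcal C_\delta\big)\cdot(\text{something})$ plus the slow-recurrence term, and one must show the frequency of deep returns is controlled — this is exactly where the slow recurrence statement (which averages $\log d$, not just counts visits) does the work, since a positive-frequency set of returns with bounded-away-from-$c$ depth contributes a bounded negative amount that is absorbed, while very deep returns are rare enough by Theorem~1. I expect this bookkeeping — organizing the telescoping of $|Df^n|$ through the first-return structure of the cycles $\Lambda_n$ so that the outside-of-$\mathcal C_\delta$ pieces genuinely have uniformly bounded distortion — to be the main technical obstacle.

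\textbf{The upper bound $\chi_f(c_1^\pm)\leq 0$.} For this direction I would use that the orbit of $c_1^-$ lies in the Cantor attractor $\mathcal O_f$, and exploit the renormalization structure directly: $c_1^- = f(c^-)$ and the orbit of $c$ visits each cycle $C_n$, with $f^{S_n^-}(C_n^-)\supset C_n^-$. Because $C_n^- \subset f^{S_n^-}(C_n^-) \subset C_n$ and $|C_n|\to 0$, the map $f^{S_n^-}$ restricted to the relevant branch is an \emph{almost-return} with a fixed-point-like behavior; more precisely, the diameters $|f^i(C_n^-)|$ are small (summing to at most $|\Lambda_n^-|\leq 1$ over $S_n^-$ terms), so by a standard argument (the orbit of $c_1^-$ up to time $S_n^-$ lies in intervals of total length $\leq 1$ and there are $S_n^- \geq 2^n$ of them) the product of derivatives along the branch $L_n$ of $f^{S_n^-}$ containing $C_n^-$ cannot be large: indeed $|Df^{S_n^-}(y)|\cdot|C_n^-|$ is comparable to $|f^{S_n^-}(C_n^-)|\leq |C_n|$, and bounded geometry gives $|C_n|/|C_n^-|\leq K$, so $|Df^{S_n^-}(\xi)|\leq K$ for some $\xi\in C_n^-$, and by bounded distortion (negative Schwarzian plus bounded geometry of the cycle) for \emph{all} $\xi\in C_n^-$ up to a uniform factor. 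Applying this along the subsequence $n=S_n^-$ (which grows exponentially, so $\frac1{S_n^-}\log(\text{bounded})\to 0$) and using submultiplicativity / near-continuity of $n\mapsto\frac1n\log|Df^n(c_1^-)|$ along these return times yields $\overline\chi_f(c_1^-)\leq 0$. One caveat is that $c_1^-$ itself need not be $p_n^-$ (the periodic endpoint), so I would instead apply the bounded-distortion estimate on the interval of generation $n$ containing $c_1^-=f(c^-)$, using that the first-return branches are diffeomorphisms with negative Schwarzian and the bounded-geometry ratios control the Koebe space.

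\textbf{From pointwise to the measure.} Finally, $\chi_\mu(f)=\int\log|Df|\,d\mu$. Since $\mu$ is supported on $\mathcal O_f = \overline{O_f^+(c_1^-)}$ and $f|\mathcal O_f$ is uniquely ergodic (as noted in the excerpt, by bounded combinatorics via \cite{GM}), the Birkhoff averages $\frac1n\sum_{i=0}^{n-1}\log|Df(f^i(c_1^-))|$ converge to $\chi_\mu(f)$ \emph{provided} $\log|Df|$ were continuous on $\mathcal O_f$ — but it is not, since $Df\to 0$ near $c$ and $c$ may lie in (the closure of) $\mathcal O_f$. I would handle this by the standard truncation: $\int\log|Df|\,d\mu = \lim_{\delta\to0}\int \max(\log|Df|,\log\delta)\,d\mu$ by monotone convergence (the integrand is bounded above), the truncated integrand \emph{is} continuous, unique ergodicity gives that its integral equals $\lim_n\frac1n\sum_{i<n}\max(\log|Df(f^i(c_1^-))|,\log\delta)$, and this Birkhoff-type average is $\geq \frac1n\sum_{i<n}\log|Df(f^i(c_1^-))|$, whose $\liminf$ is $\geq 0$ by the lower-bound argument above while also $\mu$ being $f$-invariant and $\log|Df|$ integrable (non-flatness gives $\log d(x,c)\in L^1(\mu)$ because $\mu(\mathcal C_\delta)=\mu(C_n\cap\mathcal O_f)\leq 2/S_n$ decays exponentially in $n$) forces $\chi_\mu(f)\le$ the upper bound $0$ by Ruelle's inequality or directly by Fatou applied to the upper-bound subsequence. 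Combining the two inequalities gives $\chi_\mu(f)=0$, and feeding this back shows the pointwise limits exist and equal $0$.
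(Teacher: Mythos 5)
Your plan diverges from the paper at both of the crucial inequalities, and in each place the step you flag as ``bookkeeping'' is in fact a genuine gap. For the lower bound $\chi_f(c_1^\pm)\geq 0$: slow recurrence alone cannot deliver this. Outside $\mathcal C_\delta$ the only available pointwise bound is $\log|Df|\geq -\log C(\delta)$ with $C(\delta)\asymp \delta^{-(\alpha-1)}\to\infty$, and since $f$ has a Cantor attractor there is no expansion hypothesis anywhere, so nothing ``absorbs'' the bounded negative contribution of returns at a definite depth; for fixed $\delta$ your splitting only yields $\liminf_n\frac1n\log|Df^n(c_1^\pm)|\geq -\mathrm{const}(\delta)$ with a constant that does not go to zero, and Theorem~1 controls only the average of $\log d(\cdot,c)$ over deep returns, not the sign of the Birkhoff sum of $\log|Df|$. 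The paper does not attempt this estimate at all: Lemma~3.3 first shows (via the continuous truncation $\varphi_N=\max(\log|Df|,-N)$, unique ergodicity of $f|\mathcal O_f$, integrability from Proposition~2, and---this is the actual use of Theorem~1---the slow recurrence bound on the discrepancy $\frac1n\sum(\varphi_N-\varphi)(f^i(c_1^\pm))$) that the limit $\chi_f(c_1^\pm)$ \emph{exists} and equals $\chi_\mu(f)$; the inequality $\chi_\mu(f)\geq 0$ then comes from Przytycki's theorem \cite{Pr}, not from a direct orbit estimate. Your truncation step in the last paragraph is essentially this Lemma~3.3, but as written it only produces a one-sided comparison; without the slow-recurrence control of the truncation error you do not get existence of the pointwise limit, which is the nontrivial content.

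For the upper bound the paper again argues softly, by contradiction: if $\chi_\mu(f)>0$ then by Lemma~3.3 $|Df^n(c_1^\pm)|\to\infty$ (the large derivative condition), and by Cui--Ding \cite{CD} this forces an absolutely continuous invariant probability measure, contradicting the fact that the physical measure is supported on the zero-Lebesgue-measure Cantor attractor $\mathcal O_f$. Your direct renormalization argument is closer in spirit to de Faria--Guarino's real-bounds approach for critical circle maps, and could conceivably be made to work, but as sketched it has two holes: the mean-value/distortion bound $|Df^{S_n^-}|\lesssim K$ lives on $C_n^-$ while the orbit you need is that of $c_1^-=f(c^-)$, an endpoint of $f(C_n^-)$, so the estimate must be transported through the critical branch; and, more seriously, bounding $\frac1n\log|Df^n(c_1^-)|$ only along the sparse subsequence $n=S_k^-$ (which grows at least like $2^k$, with consecutive gaps comparable to $S_k^-$ itself) does not control $\limsup_n$ at intermediate times---the naive interpolation you invoke loses a factor $(\max|Df|)^{n-S_k^-}$ whose exponent is not $o(n)$. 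So either the intermediate times must be handled by a finer decomposition through the nested cycles, or one should simply take the paper's route, which avoids both issues.
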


Now we consider random perturbations of infinitely renormalizable contracting Lorenz maps.

Denote $I = [0,1]$. From now on, we shall assume that $f(I) \subset {\rm int} (I)$ and let $\epsilon_0 = d(f[0, 1], \{ 0, 1\})$. This is reasonable since the endpoints of $I$ are assumed to be repelling, so we can extend $f$ to a little larger interval than $I$ and then rescale to $I$ affinely. In this sense, the endpoints are no more fixed. Then $f_t(I) \subset I$ for all $|t| \leq \epsilon_0$, where $f_t(x) = f(x) +t$. Let $T= [-\epsilon_0, \epsilon_0]$, and for $\underline t =(t_1, t_2, \ldots) \in T^{\mathbb N}$, define
\[ f_{\underline t}^n = f_{t_n} \circ f_{t_{n-1}} \circ \ldots \circ f_{t_1}, \ \ \ n =1, 2 \ldots.
\]
We call $\{ f_{\underline t}^n(x)\}_{n=0}^{\infty}$ a {\it random orbit} starting from $x$. For $\epsilon \in (0, \epsilon_0]$, let $\theta_{\epsilon}$ be a probability measure supported on $[-\epsilon, \epsilon]$. This naturally induces a Markov process $\chi_{\epsilon}$ on $I$ with the following transition probabilities:
\[ P_{\epsilon}(E|x) = \theta_{\epsilon} \{ t | t \in [-\epsilon, \epsilon] , f_t(x) \in E \} = \int_E \theta_{\epsilon} (y - f(x)) dy.
\]
Then each $P_{\epsilon}$ is supported in $ [-\epsilon, \epsilon]$. In what follows, we consider the following conditions on the probability measure $\theta_{\epsilon}$:
\begin{itemize}
\item[(A1)] each $\theta_{\epsilon}$ is supported on $[-\epsilon, \epsilon]$ and is absolutely continuous with respect to the Lebesgue measure;
\item[(A2)] there exists $d_0>0$ such that for all $\epsilon >0$, the density function satisfies $|\frac{{\rm d\theta_{\epsilon}}}{{\rm d Leb}}|_{L^{\infty}} < \frac{d_0}{\epsilon}$ and $\frac{{\rm d}\theta_{\epsilon}}{{\rm d Leb}} >0$ in a neighborhood of 0.
\end{itemize}

A measure $\mu_{\epsilon}$ on $I$ is called a {\it stationary measure} for the Markov process $\chi_{\epsilon}$ (or for $\theta_{\epsilon}$), if for any Borel set $A$ on $I$, we have
\[ \mu_{\epsilon} (A) = \int_I P_{\epsilon}(A|x) d\mu_{\epsilon} (x) = \int_{-\epsilon}^{\epsilon} \mu_{\epsilon}(f_t^{-1}(A)) d \theta_{\epsilon}(t).
\]
In other words, for any continuous map $\varphi: I \to \mathbb R$, the following holds
\[ \int \varphi(x) d\mu_{\epsilon} (x) = \int \int \varphi(f_t(x)) d \mu_{\epsilon} (x) d \theta_{\epsilon} (t).
\]
It follows from condition $({\rm A1})$ and $({\rm A2})$ that, for any $\epsilon>0$ small enough, the Markov process $\chi_{\epsilon}$ has a unique stationary measure $\mu_{\epsilon}$. The existence follows by for example \cite{AA}[Lemma 3.5]. The uniqueness comes from the property that the density function ${\rm d}\theta_{\epsilon}/{\rm d Leb}$ is bounded from below. For a proof, see \cite{BY}[Part II]. The uniqueness also implies that $\mu_{\epsilon}$ is ergodic, \cite{Ki1}[Theorem 2.1]. Moreover, $\mu_{\epsilon}$ is absolutely continuous with respect to Lebesgue, see for example \cite{S}[Lemma 3.2].

A classical result in random dynamical systems (see for example \cite{AA}[Remark 3.1]) implies that every weak star accumulation point of the stationary measures $\mu_{\epsilon}$ when $\epsilon \to 0$ is an $f$-invariant probability measure, which is called a {\it zero-noise limit measure}. This naturally leads to the study of the kind of zero noise limits which arise to the notion of stochastic stability.

\begin{definition}[{\bf Stochastic stability}]
Let $f$ be a contracting Lorenz map with $f(I) \subset {\rm int}(I)$ and let $\epsilon_0 = d(f[0, 1], \{ 0, 1\})$. Suppose that $f$ has a unique physical measure $\mu$. We say that $f$ is stochastically stable with respect to the family $\{\theta_{\epsilon} \}_{0 < \epsilon \leq \epsilon_0}$ if $\mu_{\epsilon}$ converges to the physical measure $\mu$ in the weak star topology as $\epsilon \to 0$.
\end{definition}

\begin{thm}
Suppose $f \in \mathcal L_{B}$, then $f$ is stochastic stable with respect to the family $\{\theta_{\epsilon} \}_{0 < \epsilon \leq \epsilon_0}$ under the assumption $({\rm A1})$ and $({\rm A2})$.
\end{thm}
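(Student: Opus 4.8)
The plan is to show that every zero-noise limit measure of $f$ coincides with the physical measure $\mu$. Since the set of Borel probability measures on $I$ is weak star compact and $\mu$ is unique, it suffices to fix an arbitrary sequence $\epsilon_k \downarrow 0$ with $\mu_{\epsilon_k} \to \mu_0$ in the weak star topology — so that $\mu_0$ is $f$-invariant — and to prove $\mu_0 = \mu$; this forces $\mu_\epsilon \to \mu$ as $\epsilon \to 0$, which is stochastic stability.

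The engine is the version of Tsujii's theorem adapted to Lorenz maps: under $({\rm A1})$ and $({\rm A2})$, if the two critical values $c_1^-$ and $c_1^+$ satisfy the slow recurrence condition to $c$, then $\log|Df| \in L^1(\mu_0)$ for every zero-noise limit $\mu_0$, and the Margulis--Pesin entropy formula
\[
h_{\mu_0}(f) = \int \max\{0, \chi_f(x)\} \, d\mu_0(x)
\]
holds, where $\chi_f(x)$ is the pointwise Lyapunov exponent, defined $\mu_0$-a.e.\ by the integrability. By Theorem~1 this hypothesis is met for $f \in \mathcal L_B$, so the first task is to prove this formula, transporting Tsujii's scheme from the unimodal to the discontinuous setting. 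The negative Schwarzian derivative and $({\rm A1})$, $({\rm A2})$ play the same role as in the classical argument, while a \emph{robust} version of the slow recurrence estimate of Theorem~1 is what forces the contribution of the unbounded function $\log d(\cdot, c)$ to $\int \log|Df| \, d\mu_{\epsilon_k}$ to stay bounded below uniformly in $k$ — equivalently, after truncating and using weak star convergence, that $\int \log d(\cdot, c) \, d\mu_0 > -\infty$, which is the integrability of $\log|Df|$ because $\log|Df(x)| = (\alpha - 1)\log d(x, c) + O(1)$ as $x \to c$.

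Granting the formula, I would pass to the ergodic decomposition $\mu_0 = \int \nu \, d\mathbb P(\nu)$. Both sides are affine in the measure, and Ruelle's inequality gives $h_\nu(f) \le \max\{0, \chi_\nu(f)\}$ for each ergodic component, with $\chi_\nu(f) = \int \log|Df| \, d\nu$; hence $h_\nu(f) = \max\{0, \chi_\nu(f)\}$ for $\mathbb P$-almost every $\nu$. Now the structure of $f$ enters: by the classification of attractors for $C^3$ contracting Lorenz maps with negative Schwarzian derivative \cite{KP, B}, together with the minimality of $\mathcal O_f$ and the absence of wandering intervals for $f \in \mathcal L_B$, every ergodic invariant measure of $f$ is either supported on $\mathcal O_f$ — hence equal to $\mu$, since $f|\mathcal O_f$ is uniquely ergodic because $f$ has bounded combinatorics (Lemma~2.1) — or carried by a periodic orbit of $f$; and by Singer's theorem \cite{MS}, every periodic orbit of $f$, including the fixed points $0$ and $1$, is repelling. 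For a measure $\nu$ carried by a periodic orbit, $h_\nu(f) = 0$ while $\chi_\nu(f) > 0$, which contradicts the equality above; therefore $\mathbb P$-almost every $\nu$ equals $\mu$, i.e.\ $\mu_0 = \mu$.

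The main obstacle is the middle step: establishing the entropy formula for $\mu_0$ in the presence of the singular point. The difficulty is analytic. One must control how often, and how closely, a typical random orbit $\{ f_{\underline t}^i(x) \}$ for small noise enters the punctured neighborhood $\mathcal C_\delta$, so that the empirical averages $\frac{1}{n}\sum_{i < n} \log d(f_{\underline t}^i(x), c)$ — whose integrals against $x$ and $\underline t$ approximate $\int \log d(\cdot, c) \, d\mu_\epsilon$ — remain bounded below. This is the random-orbit counterpart of Theorem~1, and it is made accessible by the fact that $\mathcal O_f$ is a global attractor with $\mu(C_n \cap \mathcal O_f) \le 2/S_n$ and $S_n \ge 2^n$: the bulk of $\mu_\epsilon$ concentrates in thin neighborhoods of $\mathcal O_f$, where recurrence to $c$ is governed by the orbits of $c_1^-$ and $c_1^+$, to which Theorem~1 applies. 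Once the entropy formula is in hand, the ergodic-decomposition argument is soft.
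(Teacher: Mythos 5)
Your overall architecture matches the paper's: pass to a zero-noise limit $\mu_\infty$, invoke the adapted Tsujii theorem (the paper's Theorem 4) to get the Margulis--Pesin formula for $\mu_\infty$, take the ergodic decomposition, use Ruelle's inequality to conclude $h_\nu(f)=\int\chi^+\,d\nu$ for almost every ergodic component $\nu$, and then identify each such $\nu$ with the physical measure $\mu$. The last identification step, however, contains a genuine gap. You assert that, by the classification of attractors of Keller--St.~Pierre and Brand\~ao together with minimality of $\mathcal O_f$ and absence of wandering intervals, \emph{every} ergodic invariant measure of $f$ is either supported on $\mathcal O_f$ or carried by a periodic orbit. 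Those classification theorems concern metric and topological \emph{attractors}, not arbitrary invariant measures, and the claim itself is not true in the generality you need: an infinitely renormalizable Lorenz map (including maps in $\mathcal L_B$, whose combinatorics may have long return times) can have positive topological entropy, and the compact invariant set of points whose orbits never enter $C_1$ is, by Ma\~n\'e's theorem, a hyperbolic repelling set which is in general an uncountable Cantor set carrying many ergodic measures that are neither periodic nor supported on $\mathcal O_f$. Your argument excludes the periodic components (entropy $0$, exponent $>0$, violating the equality), but it does not exclude an ergodic component of \emph{positive} entropy living on such a hyperbolic set and satisfying $h_\nu(f)=\chi_\nu(f)>0$; nothing in your proposal rules this out.

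This is exactly the point where the paper brings in Ledrappier's theorem: for $C^2$ piecewise interval maps, an ergodic measure of positive entropy satisfies the Margulis--Pesin formula if and only if it is absolutely continuous; since $f\in\mathcal L_B$ has a global metric attractor $\mathcal O_f$ of zero Lebesgue measure attracting Lebesgue-a.e.\ point, $f$ admits no absolutely continuous invariant measure, so every ergodic component has zero entropy, hence zero exponent $\nu$-a.e. One then still needs an argument (the paper's Lemma 4.5, using Ma\~n\'e's theorem to see that the complement of the basin is hyperbolic, plus transitivity of $f$ on ${\rm supp}(\nu)$ and unique ergodicity of $f|\mathcal O_f$) to conclude that a zero-exponent ergodic measure must be supported in $\mathcal O_f$ and therefore equals $\mu$; your classification claim was also doing this work without justification. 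Separately, note that your treatment of the entropy formula for $\mu_\infty$ (the random-orbit version of slow recurrence, Lemmas 4.1--4.3 and Theorem 5 in the paper) is only a plan, not a proof, though this is consistent with taking the paper's Theorem 4 as given; the substantive defect is the unsupported classification of ergodic measures replacing the Ledrappier step.
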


\begin{remark}
Theorem 3 can be strengthened as follows. Let $f$ be a $C^3$ contracting Lorenz map with negative Schwarzian derivative. If $f$ satisfies
\begin{itemize}
\item[(1)] $f$ has a Cantor attractor $A$ and no wandering intervals; 
\item[(2)] $c_1^{\pm}$ satisfies the slow recurrence condition to $c$;
\item[(3)] $f$ has a unique physical measure $\mu$ supported on $A$ such that $f|A$ is uniquely ergodic.
\end{itemize}
Then $f$ is stochastically stable. For example, if $f$ has a wild attractor but has no wandering intervals, then $f$ may have a physical measure. If $f$ further satisfies condition (2) and (3), then $f$ is stochastically stable. However, many questions are still open about wild attractor for contracting Lorenz maps.
\end{remark}

\section{Lyapunov exponent}

In this section we check the slow recurrence condition for Lorenz maps from class $\mathcal L_B$. Using this result we prove the integrability of $\log Df$ and show that the pointwise Lyapunov exponent at $c_1^{\pm}$ equals 0.

The following lemma clarifies the geometric behavior of a map near a non-flat critical point and will be useful.

\begin{lem}
Given a contracting Lorenz map $f$ with non-flat critical point of critical exponent $\alpha>1$ and negative Schwarzian derivative, there exists a neighborhood $U$ of $c$ such that:
\begin{itemize}
\item[(1)] There exist constants $0 < a < b$ such that for all $x \in U \setminus \{c\}$
\[ a |x - c|^{\alpha-1} < Df(x) < b |x - c|^{\alpha-1}.
\]
\item[(2)] There exists $C_0>0$ such that for all $x \in U \setminus \{c\}$
\[ \log Df(x) \geq C_0 \log |x-c|.
\]
\end{itemize}
\end{lem}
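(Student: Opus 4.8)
The plan is to extract both estimates directly from the non-flat normal form $(1)$, so that the standing assumption of negative Schwarzian derivative in fact plays no role here. For $x<c$ we have $f(x)=u-\phi(x)^{\alpha}$ with $\phi\colon[0,c]\to[0,u^{1/\alpha}]$ a $C^{3}$ diffeomorphism satisfying $\phi(c)=0$; since $\phi\ge 0$ and $\phi(0)>0$, the map $\phi$ is decreasing, so $\phi'<0$ on $[0,c]$ and in particular $\phi'(c)<0$. Differentiating,
\[
Df(x)=\alpha\,\phi(x)^{\alpha-1}\,|\phi'(x)|,\qquad x<c,
\]
and symmetrically $Df(x)=\alpha\,\psi(x)^{\alpha-1}\,\psi'(x)$ for $x>c$, with $\psi'>0$ on $[c,1]$ and $\psi(c)=0$. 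In particular $Df>0$ on a punctured neighborhood of $c$ and $Df(c^{\pm})=0$.

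For part $(1)$ I would use that $\phi'$ is continuous and $\phi(c)=0$: writing $\phi(x)=\int_{x}^{c}|\phi'(t)|\,dt$ for $x<c$, continuity of $\phi'$ gives, for every small $\eta>0$, a one-sided neighborhood of $c$ on which
\[
(|\phi'(c)|-\eta)\,|x-c|<\phi(x)<(|\phi'(c)|+\eta)\,|x-c|
\quad\text{and}\quad
|\phi'(c)|-\eta<|\phi'(x)|<|\phi'(c)|+\eta .
\]
Choosing $\eta$ small enough that $|\phi'(c)|-\eta>0$, this yields $a_{-}|x-c|^{\alpha-1}<Df(x)<b_{-}|x-c|^{\alpha-1}$ with $a_{-}=\alpha(|\phi'(c)|-\eta)^{\alpha}$ and $b_{-}=\alpha(|\phi'(c)|+\eta)^{\alpha}$; the same argument on the right produces constants $a_{+},b_{+}$ in terms of $\psi'(c)$. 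Taking $U$ to be the intersection of the two one-sided neighborhoods together with $a=\min\{a_{-},a_{+}\}>0$ and $b=\max\{b_{-},b_{+}\}$ gives part $(1)$.

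Part $(2)$ is then pure bookkeeping. Taking logarithms in the lower bound of $(1)$,
\[
\log|Df(x)|>\log a+(\alpha-1)\log|x-c|,\qquad x\in U\setminus\{c\}.
\]
After shrinking $U$ so that $|x-c|<1/2$ on $U$, we have $\log|x-c|<-\log 2<0$, hence $\bigl|\log a/\log|x-c|\bigr|\le|\log a|/\log 2$; consequently
\[
\log a+(\alpha-1)\log|x-c|=\Bigl(\tfrac{\log a}{\log|x-c|}+(\alpha-1)\Bigr)\log|x-c|\ge C_{0}\,\log|x-c|
\]
with $C_{0}=(\alpha-1)+|\log a|/\log 2>0$, which is $(2)$.

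There is no substantial obstacle here; the only points requiring care are the signs — that $\phi$ is decreasing and $\psi$ increasing so that $Df>0$, and that $\log|x-c|$ is negative on $U$, which is why the division step in $(2)$ reverses the inequality — together with the need to shrink $U$ a second time in $(2)$ so that $\log|x-c|$ stays bounded away from $0$ and the constant term $\log a$ can be absorbed.
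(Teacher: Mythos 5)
Your proposal is correct and follows essentially the same route as the paper: differentiate the non-flat normal form $f(x)=u-\phi(x)^{\alpha}$ (resp.\ $1-v+\psi(x)^{\alpha}$) and use $\phi(c)=0$ together with continuity of $D\phi$ to get $\phi(x)/|x-c|\to|D\phi(c)|\neq 0$, which is exactly the limit computation the paper performs; the negative Schwarzian hypothesis is indeed not used. You merely spell out the right-hand branch and the absorption of the constant $\log a$ in part (2), details the paper leaves implicit with ``Statement (2) follows from statement (1).''
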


\begin{proof}
If $x <c$, it follows from the definition of non-flatness and Taylor's formula, that
\[ \lim_{x \to c^-} \left( \frac{Df(x)}{|x-c|^{\alpha-1}}\right) =  \lim_{x \to c^-} \alpha D\phi(x) \left(  \frac{\phi(x)}{|x-c|} \right)^{\alpha-1} = \alpha (D \phi(c))^{\alpha} >0.
\]
Statement (2) follows from statement (1). 
\end{proof}

\subsection{Slow recurrence}

To prove Theorem 1, we need the following lemma.

\begin{lem}

Let $x = c_1^{\pm}$, then for any $k \geq 1$ and $n$, we have
\[ \# \{ 0 \leq i < n : f^i(x) \in C_k \} \leq \frac{n+1}{S_k},
\]
where $S_k = \min\{ S_k^-, S_k^+\}$.
\end{lem}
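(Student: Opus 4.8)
The plan is to turn the count into a statement about return times of the first return map to $C_k$, exploiting that the forward orbit of $c_1^{\pm}$ is, up to a single shift, the orbit of the critical point $c$, which sits inside $C_k$. Fix the sign and take $x=c_1^-$; write $c_j^-:=\lim_{y\to c^-}f^j(y)$ (well defined because, as discussed below, the orbit of $c_1^-$ never hits $c$), so that $f^{i}(x)=c_{i+1}^-$ for all $i\ge 0$. Reindexing by $j=i+1$,
\[
\#\{0\le i<n:\ f^i(x)\in C_k\}=\#\{1\le j\le n:\ c_j^-\in C_k\},
\]
so it suffices to bound the number of $j\in\{1,\dots,n\}$ with $c_j^-\in C_k$.

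The second step is to read off the return-time combinatorics. The first return map $R_k$ of $f$ to $C_k$ is a Lorenz map with singular point $c$, and by the very definition of $S_k^-$ and $S_k^+$ its left branch is $f^{S_k^-}|_{C_k^-}$ and its right branch is $f^{S_k^+}|_{C_k^+}$. Hence for any genuine point $y\in C_k\setminus\{c\}$, say $y\in C_k^{\sigma}$ with $\sigma\in\{-,+\}$, one has $f^l(y)\notin C_k$ for $1\le l<S_k^{\sigma}$, so the gap from $y$ to the next visit of its orbit to $C_k$ is at least $S_k=\min\{S_k^-,S_k^+\}$. Applying this along the orbit $(c_j^-)_{j\ge1}$ at each of its visits to $C_k$, and using in addition that $c_j^-\notin C_k$ for $1\le j\le S_k^--1$ (so that the first visit occurs no earlier than time $S_k^-\ge S_k$), we get: if $m_1<m_2<\cdots$ enumerates $\{j\ge1:c_j^-\in C_k\}$, then $m_1\ge S_k$ and $m_{\ell+1}-m_\ell\ge S_k$, hence $m_\ell\ge\ell\,S_k$ for every $\ell$. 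Therefore
\[
\#\{1\le j\le n:\ c_j^-\in C_k\}=\#\{\ell\ge1:\ m_\ell\le n\}\le\frac{n}{S_k}\le\frac{n+1}{S_k},
\]
which is the asserted bound (in fact with the slightly sharper constant $n/S_k$).

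The point that needs genuine care, rather than bookkeeping, is the transfer of the first-return estimate from the interior of the branch $C_k^-$ to its endpoint $c$: namely that $c_j^-=\lim_{y\to c^-}f^j(y)$ is not an interior point of $C_k$ for $1\le j<S_k^-$, and that $c_j^-\notin\partial C_k$ and $c_j^-\ne c$ for all $j\ge1$ (this last also being what makes the iterated limits above legitimate and the gaps well defined). Here I would invoke that $f$ is infinitely renormalizable and non-flat, so that the solenoid attractor $\omega(c_1^{\pm})=\mathcal O_f$ is an infinite minimal Cantor set and hence $c_1^{\pm}$ is not preperiodic; since the endpoints of $C_k$ are periodic points, no $c_j^-$ can equal one of them, and $c_j^-=c$ for some $j\ge1$ would put $c_1^-$ in $O_f^-(c)$ and make its forward orbit finite, again impossible. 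Continuity of the left branch of $f$ up to $c$, together with $c_1^-,\dots,c_{j-1}^-\ne c$, then makes $y\mapsto f^j(y)$ continuous at $c^-$ from inside $C_k^-$, so the interior estimate $f^l(y)\notin C_k$ for $1\le l<S_k^-$ passes to the limit $c_j^-$. I expect this endpoint and degeneracy check to be the only delicate part; everything else is a direct consequence of the return-time structure of the covers recorded in Section~2.
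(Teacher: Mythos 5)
Your argument is correct and is essentially the paper's own proof: the first entry of $c_1^{\pm}$ into $C_k$ occurs no earlier than time $S_k^{\pm}-1\ge S_k-1$, and successive returns to $C_k$ are spaced at least $S_k=\min\{S_k^-,S_k^+\}$ apart, which yields the count $\le (n+1)/S_k$. The only difference is that you also verify the endpoint/limit degeneracies (that no $c_j^{\pm}$ equals $c$ or a periodic boundary point of $C_k$), a point the paper leaves implicit.
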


\begin{proof}

It suffices to prove for $x = c_1^+$. For any $k \geq 1$, the first entry time of $c_1^+$ to $C_k$ is $S_k^{+} -1 \geq S_k -1$ from the definition of renormalization. In particular, if $n+1 < S_k$, then $ \# \{ 0 \leq i < n : f^i(x) \in C_k \}$ is actually 0 for $x = c_1^{\pm}$. Also the return time of any $y \in \mathcal O_f \cap C_k$ to $C_k$ is at least $S_k = \min\{ S_k^-, S_k^+\}$. Therefore,
\[ \# \{ 0 \leq i < n : f^i(x) \in C_k \} \leq \lfloor \frac{n+1}{S_k}\rfloor \leq \frac{n+1}{S_k},
\]
where $\lfloor \cdot \rfloor$ is the integer floor function. 
\end{proof}

Note that Birkhoff's Ergodic Theorem and the fact $\mu(C_k) \leq 2/S_k$ implies that $ \# \{ 0 \leq i < n : f^i(x) \in C_k \} \sim n \mu(C_k) \leq 2n/S_k$ for $n$ large enough and for $\mu$-typical $x \in \mathcal O_f$. However, we can not use unique ergodicity to prove that this holds for $x = c_1^{\pm}$ since the characteristic function $1_{C_k}$ is not continuous.

\begin{proof}[{\bf Proof of Theorem 1}]

As explained above, let $\mu$ be the unique invariant Borel probability measure of $f$ supported on $\mathcal O_f$. By bounded geometry, there exist $0 < \rho < \rho' < 1$ such that $\rho < |C_{k+1}^-|/|C_k^-|, |C_{k+1}^+|/|C_k^+|  < \rho'$ for all $k \geq 1$. Then there exists $C_1 > 0$ such that $|C_k^-|, |C_k^+| \geq C_1 \cdot \rho^k$, and also $|C_k| \geq 2C_1 \cdot \rho^k$. Now for any $\delta>0$ sufficiently small, there exists $k_0 > 0$ maximal such that $\mathcal C_{\delta} \subset  C_{k_0}$. Moreover, $k_0 \to \infty$ as $\delta \to 0$.

Recall that $S_k$ grows at least exponentially fast: $S_k \geq 2^{k}$. By Lemma 3.2, for $x = c_1^{\pm}$, we have
\begin{align*}
\frac{1}{n}  \sum_{\substack{0 \leq i <n \\ f^i(x) \in \mathcal C_{\delta}}} -\log d (f^i(x), c) 
& \leq \frac{1}{n} \sum_{k \geq k_0} \sum_{\substack{0 \leq i <n  \\ f^i(x) \in C_k \setminus C_{k+1} }} - \log \min\{|C_{k+1}^-|, |C_{k+1}^+|\}\\
& \leq \frac{1}{n} \sum_{k \geq k_0} - \log \min\{|C_{k+1}^-|, |C_{k+1}^+|\} \cdot \frac{n+1}{S_{k}}\\
& = \left(1 + \frac{1}{n} \right) \sum_{k \geq k_0} \frac{1}{S_{k}} \cdot ( -\log \min\{|C_{k+1}^-|, |C_{k+1}^+|\})\\
& \leq 2\sum_{k \geq k_0} \frac{1}{2^{k}} \cdot ( - (k+1)\log \rho - \log C_1)\\
& \leq - \log \rho \sum_{k \geq k_0} \frac{k+1}{2^{k-1}} - \log C_1 \sum_{k \geq k_0} \frac{1}{2^{k-1}}.
\end{align*}
The last term tends to $0$ as $k_0 \to \infty$ and hence as $\delta \to 0$. This shows that for any $\epsilon> 0$, there exists $\delta >0$ small enough, and for any $n>1$, we have 
\[  \frac{1}{n} \sum_{\substack{0 \leq i <n \\ f^i(x) \in \mathcal C_{\delta}}} -\log d (f^i(x), c) < \epsilon.
\]
Hence 
\[ \limsup_{n \to \infty} \frac{1}{n} \sum_{\substack{0 \leq i <n \\ f^i(x) \in \mathcal C_{\delta}}} -\log d (f^i(x), c) \leq \epsilon,
\]
here $x = c_1^{\pm}$. This finishes the proof.

\end{proof}

\subsection{Integrability}

The integrability of $\log Df$ for smooth interval maps was obtained by Przytycki in \cite{Pr}, where he also proved that the Lyapunov exponent is non-negative: $\int \log Df d \mu \geq 0$. We will obtain the integrability again using bounded geometry. The proof is similar with \cite{dFG2}[Proposition 3.1].

Let $\psi(x) = | \log Df(x)|$. Let $\psi_n =1_{[0,1]\setminus C_n} \cdot \psi$, that is, $\psi_n =0$ on $C_n$ and $\psi_n = \varphi$ outside $C_n$.

\begin{prop}
Suppose $f \in \mathcal L_B$, then the function $\log Df$ is $\mu$-integrable, i.e., $\int |\log Df| d\mu < \infty$.
\end{prop}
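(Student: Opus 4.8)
The plan is to decompose $\varphi:=\log|Df|$ as $\varphi=\varphi_n+1_{C_n}\cdot\varphi$ and to check that, for a suitably large $n$, both pieces are $\mu$-integrable; in fact I will show that $\int_{C_n}|\varphi|\,d\mu\to 0$ as $n\to\infty$. The integrability of $\varphi_n$ is immediate: writing $C_n=[p_n,q_n]$, the complement $[0,1]\setminus C_n=[0,p_n]\cup[q_n,1]$ is a union of two compact intervals on each of which $f$ is a $C^3$ diffeomorphism with $Df>0$, so $Df$ is bounded away from $0$ and from $\infty$ there; hence $\varphi_n$ is bounded, and a bounded measurable function is automatically integrable against the probability measure $\mu$. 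Thus the whole statement reduces to bounding $\int_{C_n}|\varphi|\,d\mu$.

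For this, fix $n$ so large that $C_n\subset U$, where $U$ is the neighbourhood of $c$ furnished by Lemma~3.1, shrunk if necessary so that $|Df|<1$ on $U\setminus\{c\}$ (possible because $Df(c^-)=Df(c^+)=0$). Then $0>\varphi(x)\ge C_0\log|x-c|$ on $C_n\setminus\{c\}$ by Lemma~3.1(2), so $|\varphi(x)|\le -C_0\log|x-c|$ there. Since bounded geometry forces $|C_k|\to 0$, one has $\bigcap_k C_k=\{c\}$, and $\mu(\{c\})=0$ because $\mu$ is supported on the minimal Cantor set $\mathcal O_f$ and hence non-atomic; therefore, up to the $\mu$-null set $\{c\}$, $C_n$ decomposes as the disjoint union $\bigcup_{k\ge n}(C_k\setminus C_{k+1})$. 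For $x\in C_k\setminus C_{k+1}$ the point $c$ is an endpoint of both $C_{k+1}^-$ and $C_{k+1}^+$, so $|x-c|\ge\min\{|C_{k+1}^-|,|C_{k+1}^+|\}$, and the bound $|C_j^{\pm}|\ge C_1\rho^j$ established in the proof of Theorem~1 gives $|x-c|\ge C_1\rho^{k+1}$, i.e. $-\log|x-c|\le -(k+1)\log\rho-\log C_1$.

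Combining this with $\mu(C_k\setminus C_{k+1})\le\mu(C_k)\le 2/S_k\le 2^{1-k}$ — the measure estimate of Section~2.4 together with $S_k\ge 2^k$ — I obtain
\[
\int_{C_n}|\varphi|\,d\mu\ \le\ C_0\sum_{k\ge n}\bigl(-(k+1)\log\rho-\log C_1\bigr)\,2^{1-k},
\]
which is the tail of a convergent series; in particular it is finite, and it tends to $0$ as $n\to\infty$, proving that $\log|Df|$ is $\mu$-integrable. There is no real obstacle here, but the one point that must be respected — and exactly where bounded geometry is indispensable — is that $-\log|C_k|$ grows only linearly in $k$, so it is absorbed by the geometric factor $2^{-k}$ coming from the at-least-exponential growth $S_k\ge 2^k$ of the return times; without an a priori lower bound on $|C_k^{\pm}|$ the intervals could shrink super-exponentially and the series could diverge. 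The remaining ingredients (continuity and positivity of $Df$ off $c$, non-atomicity of $\mu$, and $\bigcap_k C_k=\{c\}$) are routine.
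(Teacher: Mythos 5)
Your proof is correct and follows essentially the same route as the paper: decompose a neighborhood $C_n$ of $c$ into the annuli $C_k\setminus C_{k+1}$, bound $-\log|Df|$ there via Lemma~3.1 and the bounded-geometry estimate $|C_{k+1}^{\pm}|\ge C_1\rho^{k+1}$, and control the measure by $\mu(C_k)\le 2/S_k\le 2^{1-k}$. The only cosmetic difference is that you sum the tail over the disjoint annuli directly (and note explicitly the boundedness off $C_n$ and $\mu(\{c\})=0$), whereas the paper phrases the same computation through the truncations $\varphi_n$ and the Monotone Convergence Theorem.
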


\begin{proof}

Note that the sequence $\{\psi_n \}$ converges monotonically to $\psi(x)$. Let $n_0$ be the smallest positive integer such that $C_{n_0} \subset U$ as in Lemma 3.1. It suffices to consider the values for $n \geq n_0$. Since $\psi_n$ is identically zero on $C_n$ and equals $\psi$ everywhere else, we have

\[ \int \psi_n d \mu \leq \int_{[0, 1]\setminus C_{n_0}}  \psi d \mu + \sum_{k=n_0}^{n-1} \int_{C_k \setminus C_{k+1}}  \psi d \mu. \eqno{(3.1)}
\]
The first integral on the right-hand side is a fixed number independent of $n$. Hence it suffices to bound the last sum. Using Lemma 3.1, we have
\[ \sum_{k=n_0}^{n-1} \int_{C_k \setminus C_{k+1}}  \psi d \mu \leq - C_0 \sum_{k=n_0}^{n-1} \mu(C_k \setminus C_{k+1}) \log \min\{|C_{k+1}^-|, |C_{k+1}^+|\}. \eqno{(3.2)}
\]
By (2.1) and (2.3), we have that
\[
\mu(C_k \setminus C_{k+1}) \leq \mu (C_k \cap \mathcal O_f) \leq \frac{2}{S_k} \leq \frac{1}{2^{k-1}}.
\]
 Therefore, 
\begin{align*}
& - C_0 \sum_{k=n_0}^{n-1} \mu(C_k \setminus C_{k+1}) \log \min\{|C_{k+1}^-|, |C_{k+1}^+|\} \\ & \leq  - C_0 \sum_{k=n_0}^{n-1} \frac{1}{2^{k-1}} (\log C_1 + (k+1) \log \rho)\\
& =  -C_0 \log C_1 \sum_{k=n_0}^{n-1} \frac{1}{2^{k-1}} + (-C_0 \log \rho) \sum_{k=n_0}^{n-1} \frac{k+1}{2^{k-1}} < \infty.
\end{align*} 
Taking back to (3.1), we can conclude that there exists a constant $C_2>0$ independent of $n$ such that 
\[ \int \psi_n d \mu \leq C_2 \mbox{ for all } n \geq 1.
\]
Then by the Monotone Convergence Theorem, $\psi$ is $\mu$-integrable. This finishes the proof.
\end{proof}

\subsection{Zero Lyapunov exponent}

In this subsection we use a different truncated map which is continuous and used in \cite{Ji}. First we remark that since $f|\mathcal O_f$ is uniquely ergodic, for any continuous map $\phi : \mathcal O_f \to \mathbb R$ and any $x \in \mathcal O_f$, we have
\[ \lim_{n \to \infty} \frac{1}{n} \sum_{i=0}^{n-1} \phi (f^i(x)) = \int \phi d \mu.
\]
Let $\varphi(x) = \log Df(x)$, and $\varphi_N(x) = \max(\varphi(x), -N)$, here $N = 1, 2, 3, \ldots$. Obviously, $\varphi_N(x) \geq \varphi(x)$. By Lemma 3.1, for any $N$ large enough, there exists $\delta_N>0$ small enough such that $\varphi(x) \geq -N$ outside $\mathcal C_{\delta_N}$. Moreover, $\delta_N \to 0$ as $N \to \infty$.

\begin{lem}
Suppose $f \in \mathcal L_B$. Then $\chi_f(c_1^-) = \chi_f(c_1^+) = \chi_{\mu} (f) = \int \log Df d\mu$.
\end{lem}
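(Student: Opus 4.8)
The plan is to sandwich the Birkhoff sums of $\varphi=\log|Df|$ along the orbit of $c_1^{\pm}$ between $\int\varphi_N\,d\mu$ and $\int\varphi\,d\mu$, then let the truncation level $N\to\infty$. Two preliminary observations are needed. First, by the chain rule $\frac1n\log|Df^n(c_1^{\pm})|=\frac1n\sum_{i=0}^{n-1}\varphi(f^i(c_1^{\pm}))$, and since $f$ is infinitely renormalizable the orbit of $c_1^{\pm}$ avoids $c$ (otherwise $\omega(c_1^{\pm})$ would be finite, contradicting Proposition 2.1), so all terms are finite. Second, $c_1^{\pm}\in\mathcal O_f$: the point $c_1^-=\lim_{x\to c^-}f(x)$ is the right endpoint of $\overline{f(C_n^-)}\subseteq\Lambda_n$ for every $n\ge1$, hence $c_1^-\in\bigcap_{n\ge0}\Lambda_n=\mathcal O_f$, and likewise $c_1^+$; since $\mathcal O_f$ is forward invariant, the whole orbit of $c_1^{\pm}$ stays in $\mathcal O_f$. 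Because each $\varphi_N$ is continuous on $[0,1]$ (it equals $-N$ on a neighborhood of $c$) and $f|\mathcal O_f$ is uniquely ergodic, $\frac1n\sum_{i=0}^{n-1}\varphi_N(f^i(c_1^{\pm}))\to\int\varphi_N\,d\mu$.

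For the upper bound, $\varphi_N\ge\varphi$ gives $\frac1n\log|Df^n(c_1^{\pm})|\le\frac1n\sum_{i=0}^{n-1}\varphi_N(f^i(c_1^{\pm}))$, so $\overline\chi_f(c_1^{\pm})\le\int\varphi_N\,d\mu$ for every $N$; since $\varphi$ is $\mu$-integrable (Proposition 3.3) and $\varphi_N\downarrow\varphi$, monotone convergence yields $\overline\chi_f(c_1^{\pm})\le\int\varphi\,d\mu=\chi_\mu(f)$.

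The lower bound is the heart of the matter, and it is where Theorem 1 is used. Decompose $\varphi=\varphi_N+(\varphi-\varphi_N)$: the term $\varphi-\varphi_N$ is nonpositive and vanishes wherever $\varphi\ge-N$, in particular off a punctured neighborhood $\mathcal C_{\delta_N}$ of $c$ with $\delta_N\to0$ as $N\to\infty$; on $\mathcal C_{\delta_N}$, Lemma 3.1(2) gives $\varphi(y)\ge C_0\log d(y,c)$, hence $(\varphi-\varphi_N)(y)\ge C_0\log d(y,c)$ there. Thus, with $x=c_1^{\pm}$,
\[
\frac1n\sum_{i=0}^{n-1}\varphi(f^i(x))\ \ge\ \frac1n\sum_{i=0}^{n-1}\varphi_N(f^i(x))\ +\ \frac{C_0}{n}\sum_{\substack{0\le i<n\\ f^i(x)\in\mathcal C_{\delta_N}}}\log d(f^i(x),c).
\]
Taking $\liminf_{n\to\infty}$, the first term tends to $\int\varphi_N\,d\mu\ge\int\varphi\,d\mu$; for the second, fix $\eta>0$, use Theorem 1 to pick $\delta>0$ with $\liminf_n\frac1n\sum_{f^i(x)\in\mathcal C_\delta}\log d(f^i(x),c)>-\eta$, and then take $N$ large enough that $\delta_N<\delta$. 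Since $\mathcal C_{\delta_N}\subseteq\mathcal C_\delta$, the $\mathcal C_{\delta_N}$-sum drops only nonpositive terms and so dominates the $\mathcal C_\delta$-sum, giving $\liminf_n\frac1n\sum_{i=0}^{n-1}\varphi(f^i(x))\ge\int\varphi\,d\mu-C_0\eta$. Letting $\eta\to0$, $\liminf_n\frac1n\log|Df^n(c_1^{\pm})|\ge\int\varphi\,d\mu$.

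Combining the two bounds, $\liminf_n\frac1n\log|Df^n(c_1^{\pm})|\ge\int\varphi\,d\mu\ge\overline\chi_f(c_1^{\pm})$, so for both signs the limit defining $\chi_f(c_1^{\pm})$ exists and equals $\int\log|Df|\,d\mu=\chi_\mu(f)$, which is the assertion. The main obstacle is the lower bound: one must show that the deep excursions of the orbit of $c_1^{\pm}$ into $\mathcal C_\delta$ cannot drag its Birkhoff average of $\varphi$ below $\int\varphi\,d\mu$, and the only leverage is Lemma 3.1 (turning closeness to $c$ into a quantitative lower bound on $\log|Df|$) together with the slow recurrence of Theorem 1 (controlling the accumulated cost of those excursions); the bookkeeping of the three limits $n\to\infty$, $\delta\to0$, $N\to\infty$ must be carried out in that order.
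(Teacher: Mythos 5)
Your proof is correct and follows essentially the same route as the paper's: truncate $\varphi=\log|Df|$ to the continuous $\varphi_N=\max(\varphi,-N)$, use unique ergodicity of $f|\mathcal O_f$ for the truncated averages, control the discrepancy via Lemma 3.1 together with the slow recurrence of Theorem 1, and pass to the limit $N\to\infty$ by integrability (Proposition 2) and monotone convergence; your explicit two-sided sandwich is just a reorganization of the paper's ``difference of averages is small, hence Cauchy'' argument. The additional checks you record (that $c_1^{\pm}\in\mathcal O_f$ and that the critical orbits avoid $c$) are facts the paper uses implicitly, and your verification of them is a harmless bonus.
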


\begin{proof}
Let $x = c_1^{\pm}$, we have
\begin{align*}
0 & \leq  \frac{1}{n} \sum_{i=0}^{n-1} \varphi_N (f^i(x)) - \frac{1}{n} \sum_{i=0}^{n-1} \varphi (f^i(x)) \\
   & = \frac{1}{n} \sum_{\substack{0 \leq i < n \\ \varphi(f^i(x)) < -N}} (-N -\varphi(f^i(x)))\\
   & = \frac{1}{n} \sum_{\substack{0 \leq i < n \\ \varphi(f^i(x)) <- N}} ( -N - \log Df(f^i(x)) ).
\end{align*}
By Lemma 3.1 and the remark before the beginning of the proof, we have that for $N$ large enough
\begin{equation*}
\frac{1}{n} \sum_{\substack{0 \leq i < n \\ \varphi(f^i(x)) < - N}} (-N - \log Df(f^i(x)) )  \leq C_0 \cdot \frac{1}{n} \sum_{\substack{0 \leq i < n \\ f^i(x)\in C_{\delta_N}}} - \log |f^i(x) - c|.
\end{equation*}
By Theorem 1, the last term is arbitrarily small provided $n$ and $N$ are large enough. On the other hand, since $\varphi_N(x)$ is continuous, the sequence of time averages
\[ \lim_{n \to \infty} \frac{1}{n} \sum_{i=0}^{n-1} \varphi_N(f^i(x)) 
\]
converges at every $x \in \mathcal O_f$ to $\int \varphi_N d \mu$. By Proposition 2 and the monotone convergence theorem,
\[\lim_{N \to \infty} \int \varphi_N d \mu = \int \varphi d \mu = \chi_{\mu} (f).
\]
It follows that $n^{-1} \sum_{i=0}^{n-1} \varphi(f^i(x)) $ is also a Cauchy sequence, hence its limit exists when $n \to \infty$. Furthermore, 
\[ \lim_{n \to \infty } \frac{1}{n} \sum_{i=0}^{n-1} \varphi(f^i(x))  = \lim_{N \to \infty }\lim_{n \to \infty} \frac{1}{n} \sum_{i=0}^{n-1} \varphi_N(f^i(x))  = \int \varphi d \mu = \chi_{\mu} (f).
\]
This finishes the proof. 
\end{proof}

According to Przytycki (\cite{Pr}[Theorem B]), if $f$ is an interval maps and $\mu$ is an ergodic invariant probability, then either $\chi_{\mu}(f) \geq 0$ or $\mu$ is supported on a strictly attracting periodic orbit. Since $f$ is infinitely renormalizable and $Sf \leq 0$, $f$ has no attracting periodic cycles.

\begin{proof}[{\bf Proof of Theorem 2}]
From the remark above, $\chi_{\mu}(f) \geq 0$. Assume that $\chi_{\mu}(f) >0$. Then by Lemma 3.3, $\chi_f(c_1^+) = \chi_f(c_1^-) >0$ which implies the large derivative condition:
\[ \lim_{n \to \infty} Df^n(c_1^{\pm}) = + \infty.
\]
According to \cite{CD} there exists an absolutely continuous invariant probability measure. This contradicts the fact that $f$ has a physical measure supported on Cantor attractor $\mathcal O_f$. So $\chi_{\mu}(f) = 0$ and the proof is finished.
\end{proof}

\section{Stochastic stability}

\subsection{Margulis–Pesin entropy formula}

In \cite{Ts}, Tsujii considered the random perturbations of multimodal interval maps with non-degenerate critical points. By modifying Tsujii's proof, we have the following theorem whose proof will be stated in subsection 4.2.

\begin{thm}
Let $f$ be a contracting Lorenz map with non-flat critical point and such that
$f(I) \subset {\rm int}(I)$. Let $\epsilon_0 = d(f[0, 1], \{ 0, 1\})$.  Suppose that $c_1^{\pm}$ satisfy the slow recurrence condition to $c$. For any $0 < \epsilon < \epsilon_0$, let $\theta_{\epsilon}$ be a Borel probability measure satisfying condition $({\rm A1})$ and $({\rm A2})$. If a sequence of stationary measures $\mu_{\epsilon}$ for $\theta_{\epsilon}$ converges to a measure $\mu_{\infty}$, then $\mu_{\infty}$ satisfies the Margulis-Pesin entropy formula.
\end{thm}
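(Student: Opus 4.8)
The plan is to run Tsujii's scheme \cite{Ts} on the associated random dynamical system: combine the Pesin entropy formula for absolutely continuous stationary measures with an upper‑semicontinuity argument for the fibre entropy in the zero‑noise limit, using the slow recurrence of $c_1^{\pm}$ precisely to control the entropy that additive noise could create near the singular point $c$. Concretely I would set $\Omega=T^{\mathbb{Z}}$ with shift $\sigma$ and $\mathbb{P}_\epsilon=\theta_\epsilon^{\mathbb{Z}}$, and form the skew product $F_\epsilon(x,\underline{t})=(f_{t_0}(x),\sigma\underline{t})$ on $I\times\Omega$. The stationary measure $\mu_\epsilon$ lifts to an $F_\epsilon$‑invariant measure $\hat\mu_\epsilon$ with $\Omega$‑marginal $\mathbb{P}_\epsilon$ whose fibre disintegration $\{\mu_{\underline{t}}\}$ satisfies $(f_{t_0})_*\mu_{\underline{t}}=\mu_{\sigma\underline{t}}$ and $\int\mu_{\underline{t}}\,d\mathbb{P}_\epsilon=\mu_\epsilon$. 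Since $\mu_\epsilon$ is absolutely continuous, each $f_t=f+t$ is a branchwise $C^3$ diffeomorphism, and (A2) supplies a Doeblin minorization (hence a total‑variation spectral gap) for the Markov operator, the sample measures $\mu_{\underline{t}}$ are absolutely continuous for $\mathbb{P}_\epsilon$‑a.e.\ $\underline{t}$, being total‑variation limits of pushforwards of the a.c.\ measure $\mu_\epsilon$.

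Granting this, the Pesin entropy formula for random dynamical systems (Ledrappier–Young, Liu–Qian theory adapted to random maps) applies, because $Df_t\equiv Df$ is bounded; it gives for the fibre (relative) entropy
\[
h^r_{\hat\mu_\epsilon}(F_\epsilon)=\int\log^+|Df|\,d\mu_\epsilon .
\]
On the deterministic side the Ruelle–Margulis inequality gives $h_{\mu_\infty}(f)\le\int\log^+|Df|\,d\mu_\infty$, so only the reverse inequality needs proof. Now $\log^+|Df|$ is continuous on $I$ — it is bounded above and vanishes on a neighbourhood of $c$ because $Df(c^{\pm})=0$ — so weak‑$*$ convergence yields $\int\log^+|Df|\,d\mu_\epsilon\to\int\log^+|Df|\,d\mu_\infty$. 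Hence the whole theorem reduces to the upper semicontinuity
\[
h_{\mu_\infty}(f)\ \ge\ \limsup_{\epsilon\to0}\,h^r_{\hat\mu_\epsilon}(F_\epsilon).\qquad(\star)
\]

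The engine for $(\star)$ is a no‑concentration estimate at $c$, uniform in the noise: $\lim_{\delta\to0}\sup_{0<\epsilon\le\epsilon_0}\int_{\mathcal{C}_\delta}(-\log d(x,c))\,d\mu_\epsilon(x)=0$. I would prove it by iterating stationarity, $\mu_\epsilon=\int(f^n_{\underline{t}})_*\mu_\epsilon\,d\mathbb{P}_\epsilon$, to rewrite $\mu_\epsilon$‑integrals as averages of Birkhoff sums along random orbits, and then use that $\mathcal{O}_f=\omega(c_1^-)=\omega(c_1^+)$ is the minimal global attractor (Proposition 1, $Leb(B(\mathcal{O}_f))=1$) together with bounded geometry: at scales $|C_k|\gtrsim\epsilon$ the time a random orbit spends in $C_k$ obeys the same combinatorial bound as in Lemma 3.2, so the exponential growth $S_k\ge 2^k$ beats the logarithmic weight $-\log|C_k|\lesssim k$ exactly as in the proof of Theorem 1, while the deep scales $|C_k|\lesssim\epsilon$ contribute only a geometric tail that is $o_\delta(1)$ uniformly in $\epsilon$. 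With this estimate, $(\star)$ follows by a partition argument in the style of \cite{Ts}: fix a finite interval partition $\mathcal{Q}$ with $\mu_\infty(\partial\mathcal{Q})=0$ and small mesh, and bound $h^r_{\hat\mu_\epsilon}(F_\epsilon,\mathcal{Q})\le h_{\mu_\infty}(f,\mathcal{Q})+o_\epsilon(1)$ by comparing the $\mathcal{Q}$‑itineraries charged by $\hat\mu_\epsilon$ with those charged by $\mu_\infty$; since $f$ is globally Lipschitz and the perturbations are of size $\le\epsilon$, the only mechanism producing extra itineraries in the limit is the jump of $f$ across the discontinuity at $c$, whose weight is controlled via $Df(x)\asymp|x-c|^{\alpha-1}$ (Lemma 3.1) by $\int_{\mathcal{C}_\delta}(-\log d(x,c))\,d\mu_\epsilon$, hence is negligible. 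Letting $\epsilon\to0$ and refining $\mathcal{Q}$ gives $(\star)$, and combining with the random Pesin formula yields $h_{\mu_\infty}(f)=\int\log^+|Df|\,d\mu_\infty$, the desired Margulis–Pesin entropy formula.

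The hard part will be the last step — establishing $(\star)$ and, inside it, transferring the slow recurrence of $c_1^{\pm}$ to the stationary measures $\mu_\epsilon$ in a form strong enough to bound the entropy that noise could manufacture near $c$, where $f$ degenerates and its two critical values lie far apart; the absolute continuity of $\mu_\epsilon$, the random Pesin formula, and the upper bound from Ruelle–Margulis are comparatively routine random‑dynamics bookkeeping.
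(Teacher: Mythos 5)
There is a genuine gap, and it sits at the heart of your chain. You state the random Pesin formula as $h^r_{\hat\mu_\epsilon}(F_\epsilon)=\int\log^+|Df|\,d\mu_\epsilon$, but the correct formula (Proposition 3 in the paper, following Tsujii and Ledrappier--Young) is $h_{\mu_\epsilon}(\theta_\epsilon)=\int\chi^+(x;\theta_\epsilon)\,d\mu_\epsilon(x)$, where $\chi(x;\theta_\epsilon)$ is the Lyapunov exponent of the \emph{random orbit}, i.e.\ $\lim_n \frac1n\log|Df^n_{\underline t}(x)|$; for the ergodic $\mu_\epsilon$ this equals $\max\{0,\int\log|Df|\,d\mu_\epsilon\}$, which is in general strictly smaller than $\int\log^+|Df|\,d\mu_\epsilon$. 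Because of this conflation, your Lyapunov-exponent step becomes ``trivial'': $\log^+|Df|$ is indeed continuous (it vanishes near $c$), so its integrals converge under weak-$*$ convergence. But the quantity that actually has to pass to the limit is $\int\chi^+(x;\theta_\epsilon)\,d\mu_\epsilon$, governed by $\log|Df|$, which is unbounded below at $c$; controlling it as $\epsilon\to0$ is precisely the hard content of the theorem. This is the paper's Theorem 5, $\limsup_{\epsilon\to0}\int\chi^+(x;\theta_\epsilon)\,d\mu_\epsilon\ \ge\ \int\chi^+(x)\,d\mu_\infty$, whose proof needs the uniform density bound $|{\rm d}\mu_\epsilon/{\rm dLeb}|_\infty<d_0/\epsilon$ near $c$ (Lemma 4.1), a shadowing estimate showing that random orbits entering $\{\epsilon^2<|x-c|<\delta\}$ follow the critical orbit $c_1^\pm$ for time $\sim -K\log|x-c|$ (Lemma 4.2), and there the slow recurrence of $c_1^\pm$ and $\chi_f(c_1^\pm)=0$ are used in an essential way (Lemmas 4.3, Corollary 4.4). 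Your proposal never confronts this step. Moreover your announced conclusion $h_{\mu_\infty}(f)=\int\log^+|Df|\,d\mu_\infty$ is not the Margulis--Pesin formula and is actually false for the maps this theorem is applied to: for $f\in\mathcal L_B$ the limit measure is the zero-entropy measure on the Cantor attractor with $\int\log|Df|\,d\mu=0$, so $\int\log^+|Df|\,d\mu>0$ while $h_\mu(f)=0$.

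A secondary misplacement: you locate the difficulty in the entropy semicontinuity $(\star)$ and propose to prove it via a no-concentration estimate for $\mu_\epsilon$ near $c$ derived from slow recurrence (and, implicitly, from the renormalization structure $C_k$, $S_k\ge2^k$, which is not available under the hypotheses of this theorem — it assumes only non-flatness and slow recurrence, not $f\in\mathcal L_B$). In the paper, as in Tsujii, the inequality $h_{\mu_\infty}(f)\ge\limsup_{\epsilon\to0}h_{\mu_\epsilon}(\theta_\epsilon)$ (Proposition 4) is the comparatively soft step, following from standard semicontinuity of entropy together with the bound $|{\rm d}\theta_\epsilon/{\rm dLeb}|_\infty<d_0/\epsilon$; slow recurrence is not needed there. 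So the proof needs to be restructured: keep the correct random Pesin formula, keep Proposition 4 and Ruelle's inequality, and put the analytic effort into the zero-noise lower semicontinuity of the integrated random Lyapunov exponent, which is exactly where the slow recurrence hypothesis does its work.
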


Here the Margulis-Pesin entropy formula is the following formula for $f$-invariant probability measure $\mu$:
\[ h_{\mu}(f) = \int \chi^+(x) {\rm d} \mu(x)
\]
where the left side is the metric entropy and $\chi^+(x) := \max \{\overline\chi_f(x), 0 \}$.

Let $\sigma: T^{\mathbb N} \to T^{\mathbb N}$ be the shift map and define the skew product $F: I \times T^{\mathbb N} \to  I \times T^{\mathbb N}$ as follows: for any $\underline t =(t_1, t_2, \ldots) \in T^{\mathbb N}$
\[ \sigma(\underline t) = (t_2, t_3, \ldots), \ \ F(x, \underline t) = (f_{t_1}(x),  \sigma(\underline t)).
\]
A stationary measure $\mu_{\epsilon}$ is called {\it ergodic} if $\mu_{\epsilon} \times \theta_{\epsilon}^{\mathbb N}$ is ergodic for skew product map $F$. The existence of stationary measure is well known. The uniqueness of $\mu_{\epsilon}$ comes from condition (A2) assuming that the density of $\theta_{\epsilon}$ is bounded from below. The ergodicity follows from the uniqueness, see for example \cite{Ki1}[Theorem 2.1] and \cite{LW}[Lemma 4.1].

Let $\pi :  I \times T^{\mathbb N} \to I$ be the projection. Let $\mathscr A$ be the sub-$\sigma$-algebra of Borel $\sigma$-algebra of $I \times T^{\mathbb N}$ which consists of all the subsets of the form $I \times B$ with $B$ a Borel subset of $T^{\mathbb N}$. We also define the entropy $h_{\mu}(\theta)$ for a probability measure $\theta$ on $T$ and $P_{\theta}$-invariant probability measure $\mu$ on $I$ by
\[ h_{\mu}(\theta) = h_{\mu \times \theta^{\mathbb N}}(F| \mathscr A) : = \sup_{\mathcal Q} h_{\mu \times \theta^{\mathbb N}}(F, \mathcal Q|\mathscr A)
\]
where $\mathcal Q$ is finite partition of $I \times T^{\mathbb N}$, the right side is ordinary condition entropy. If $\mu$ is an $f$-invariant measure, then $h_{\mu}(\delta_f) = h_{\mu}(f)$. For $(x, \underline t) \in I \times T^{\mathbb N}$, the Lyapunov exponent for the random trajectories, denoted $\chi(x, \underline t)$, is given by
\[ \chi(x, \underline t) =\lim_{n \to \infty} \frac{1}{n} \log Df_{\underline t}^n (x).
\]
By Birkhoff again, if $\mu \times \theta^{\mathbb N}$ is ergodic, then $\chi(x, \underline t)$ exists and does not depend on $\underline t$ for $\mu \times \theta^{\mathbb N}$ almost every $(x, \underline t)$. For this reason we shall denote $\chi(x, \underline t)$ as $\chi(x; \theta)$, see \cite{Ki1}[Theorem 2.2].


\begin{prop}
Let $f$ be a contracting Lorenz map with non-flat critical point. Then there exists a constant $\delta > 0$ such that if $\epsilon < \delta$ and $\theta_{\epsilon}$ is a probability measure on $[- \epsilon, \epsilon]$ which is absolutely continuous w.r.t the Lebesgue measure with $|\frac{{\rm d\theta_{\epsilon}}}{{\rm d Leb}}|_{L^{\infty}} < \infty$. Then 
\[ h_{\mu_{\epsilon}}(\theta_{\epsilon}) = \int \chi^+(x; \theta_{\epsilon}) d \mu_{\epsilon}(x).
\]
Where $\chi^+(x; \theta_{\epsilon}) = \max \{ 0, \chi(x; \theta_{\epsilon})\}$ and $\mu_{\epsilon}$ is the stationary measure for $\theta_{\epsilon}$.
\end{prop}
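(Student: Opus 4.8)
The identity to be proved is the Margulis--Pesin (equivalently Rokhlin, or Ledrappier--Young) entropy formula for the skew product $F$ on $I\times T^{\mathbb N}$ relative to the sub-$\sigma$-algebra $\mathscr A$, and its validity here is forced by \emph{absolute continuity} of the stationary measure. Indeed, since $\|{\rm d}\theta_\epsilon/{\rm d}{\rm Leb}\|_{L^\infty}<\infty$, for any stationary $\mu_\epsilon$ and any Borel set $A$ we have $\mu_\epsilon(A)=\int P_\epsilon(A|x)\,d\mu_\epsilon(x)\le\|{\rm d}\theta_\epsilon/{\rm d}{\rm Leb}\|_{L^\infty}\,{\rm Leb}(A)$, so $\mu_\epsilon=\rho_\epsilon\,{\rm Leb}$ with $\rho_\epsilon$ bounded above, and hence $m:=\mu_\epsilon\times\theta_\epsilon^{\mathbb N}$ is fiberwise absolutely continuous with respect to ${\rm Leb}\times\theta_\epsilon^{\mathbb N}$. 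We may assume $m$ ergodic for $F$, passing to ergodic components if necessary, since both $h_{\mu_\epsilon}(\theta_\epsilon)$ and $\int\chi^+(x;\theta_\epsilon)\,d\mu_\epsilon$ are affine in $\mu_\epsilon$. Because the perturbation is additive, $Df_t=Df$, so the chain rule and Birkhoff's theorem for $F$ give that the random exponent is the $m$-a.e.\ constant $\chi(x;\theta_\epsilon)=\int\log|Df|\,d\mu_\epsilon$; this number is finite, since $\log|Df|$ is bounded above on $[0,1]\setminus\{c\}$ and, by non-flatness (Lemma 3.1), $\log|Df(x)|\asymp(\alpha-1)\log|x-c|$ near $c$, a logarithmic singularity which is integrable against the bounded density $\rho_\epsilon$. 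In particular $\chi^+(x;\theta_\epsilon)=\max\{0,\int\log|Df|\,d\mu_\epsilon\}$ is constant.

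The inequality $h_{\mu_\epsilon}(\theta_\epsilon)\le\int\chi^+(x;\theta_\epsilon)\,d\mu_\epsilon$ is the Ruelle inequality for random dynamical systems (Kifer \cite{Ki1}; see also Bahnmüller--Liu and Liu--Qian for maps with singularities). It needs no more than the uniform upper bound on $\log^+|Df_t|$: the critical point and the discontinuity at $c$ only lower expansion and injectivity, hence do not affect the upper estimate, while for $\epsilon$ below a threshold $\delta\le\epsilon_0$ one has $f_t(I)\subset{\rm int}(I)$, so the random orbits stay inside $I$.

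For the reverse inequality --- the substantive half --- I would invoke the Rokhlin entropy formula for $F$ relative to $\mathscr A$, available precisely because $m$ is fiberwise absolutely continuous: the fiber Jacobian of $F$ with respect to $m$ is $\rho_\epsilon(f_{t_1}x)\,|Df(x)|/\rho_\epsilon(x)$, and integrating this against $m$ and using stationarity of $\mu_\epsilon$ to cancel the $\rho_\epsilon$-terms gives $h_{\mu_\epsilon}(\theta_\epsilon)=\int\log|Df|\,d\mu_\epsilon$. Since $h_{\mu_\epsilon}(\theta_\epsilon)\ge0$, this forces $\int\log|Df|\,d\mu_\epsilon\ge0$ --- a random analogue of Przytycki's inequality \cite{Pr} --- and therefore $h_{\mu_\epsilon}(\theta_\epsilon)=\int\log|Df|\,d\mu_\epsilon=\chi^+(\cdot\,;\theta_\epsilon)=\int\chi^+(x;\theta_\epsilon)\,d\mu_\epsilon$, which is the claim. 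An equivalent route is the Ledrappier--Young criterion: on the full-measure set where the exponent is positive the local unstable sets for $F$ are open subintervals of the one-dimensional fibers and carry the (absolutely continuous) restrictions of $\mu_\epsilon$, so $m$ is an SRB measure for $F$ and the entropy formula holds; when the exponent is $\le 0$ there is nothing to prove, as both sides vanish.

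The main obstacle is technical, not structural: Pesin--Ledrappier--Young and Rokhlin theory is classically developed for diffeomorphisms, or at least for maps without critical points or discontinuities, so the whole argument has to be run inside a ``maps with singularities'' framework (Katok--Strelcyn style, or its one-dimensional adaptations). This means verifying the standard integrability hypotheses against $m$ --- that $\log|Df|$ and $\log d(\cdot,\{0,c,1\})$ are $m$-integrable, and that typical random orbits do not approach the singular set $\{0,c,1\}$ faster than exponentially --- together with checking that the point partition of $I$ generates modulo $\mathscr A$ (or using a refining sequence of finite partitions and passing to the limit), and, on the Rokhlin route, controlling $\log\rho_\epsilon$ near the boundary of $\mathrm{supp}\,\mu_\epsilon$. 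For each fixed $\epsilon<\delta$ all of this is in place: non-flatness of $c$ makes every relevant quantity blow up at most logarithmically near $\{0,c,1\}$ and $\rho_\epsilon$ is bounded above, so the estimates are uniform in $n$. The effort lies in threading the standard arguments carefully through these singular versions rather than in any genuinely new idea.
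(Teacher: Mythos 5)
The paper does not actually prove Proposition 3: it simply observes that the formula is Tsujii's theorem for smooth interval maps with non-degenerate critical points \cite{Ts} and Ledrappier--Young's theorem for random diffeomorphisms \cite{LY}, and asserts that ``by modifying their proofs slightly'' the same holds for a non-flat Lorenz singularity. Your second route --- Ruelle's inequality for random maps (Kifer \cite{Ki1}) for the upper bound, and the SRB/absolute-continuity characterization of the relative entropy formula for the lower bound, with the real work being the Katok--Strelcyn-style verification of the integrability/approach-rate conditions near the singular set --- is exactly that strategy, and your level of detail (deferring the singular adaptation) matches the paper's. Your reduction to ergodic stationary components and the identification of $\chi(x;\theta_\epsilon)$ with $\int\log|Df|\,d\mu_\epsilon$ on each such component are fine.

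However, your first (``Rokhlin'') route contains a genuine error. The relative entropy $h_{\mu_\epsilon}(\theta_\epsilon)=h_{\mu_\epsilon\times\theta_\epsilon^{\mathbb N}}(F|\mathscr A)$ is \emph{not} the integral of the logarithm of the fiber Jacobian, and the identity $h_{\mu_\epsilon}(\theta_\epsilon)=\int\log|Df|\,d\mu_\epsilon$ you extract from the $\rho_\epsilon$-cancellation is false in general: the Proposition only assumes $f$ is a contracting Lorenz map with a non-flat critical point, so $f$ may have an attracting periodic orbit, in which case the random exponent is negative, $h_{\mu_\epsilon}(\theta_\epsilon)=0$ (by Ruelle), and $\int\log|Df|\,d\mu_\epsilon<0$. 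Contraction contributes zero to the relative entropy but contributes negatively to the Jacobian integral, which is precisely why the statement involves $\chi^+$ rather than $\chi$; consequently the ``random analogue of Przytycki's inequality'' you deduce from $h\ge 0$ is unfounded (and cannot be, since $\int\log|Df|\,d\mu_\epsilon\ge 0$ simply fails in the attracting case). The proposal therefore stands only on your alternative Ledrappier--Young route, which is sound in outline; if you keep the Rokhlin-style computation, it must be restricted to the positive-exponent regime and justified via the SRB property rather than a formal Jacobian cancellation.
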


This formula was proved for smooth interval map in \cite{Ts}, and for random diffeomorphisms in \cite{LY}. By modifying their proofs slightly, we can obtain Proposition 3. The following proposition is Theorem B in \cite{Ts}. The proof is easy and essentially follows from the semicontinuity of the entropy.

\begin{prop}
Let $f$ be a contracting Lorenz map with non-flat critical point and $|\frac{{\rm d\theta_{\epsilon}}}{{\rm d Leb}}|_{L^{\infty}} <\frac{d_0}{\epsilon}$, then 
\[ 
h_{\mu_{\infty}}(f) \geq \limsup_{\epsilon \to 0+}h_{\mu_{\epsilon}}(\theta_{\epsilon}).
\]
\end{prop}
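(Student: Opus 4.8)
The plan is to run the classical upper–semicontinuity argument for fibre entropy on the skew product $F$, evaluated against one fixed partition. Write $m_\epsilon=\mu_\epsilon\times\theta_\epsilon^{\mathbb N}$, an $F$-invariant probability on $I\times T^{\mathbb N}$, so that by definition $h_{\mu_\epsilon}(\theta_\epsilon)=h_{m_\epsilon}(F\mid\mathscr A)$. The first step is to observe that $\theta_\epsilon$ converges weak-$*$ to $\delta_0$, hence $\theta_\epsilon^{\mathbb N}\rightharpoonup\delta_{\underline 0}$ in the compact product space $T^{\mathbb N}$, where $\underline 0=(0,0,\dots)$; since moreover $\mu_\epsilon\rightharpoonup\mu_\infty$ along the given sequence, one gets $m_\epsilon\rightharpoonup m_\infty:=\mu_\infty\times\delta_{\underline 0}$. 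As $\underline 0$ is a $\sigma$-fixed point and $m_\infty$ is carried by the slice $I\times\{\underline 0\}$, on which $F$ acts as $f$ and the base factor is trivial, one checks that $m_\infty$ is $F$-invariant and $h_{m_\infty}(F\mid\mathscr A)=h_{\mu_\infty}(f)$.

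Next I would fix a finite partition $\mathcal P$ of $I$ into intervals with $c\in\partial\mathcal P$, choosing the remaining (finitely many) endpoints outside the countable set $\bigcup_{i\ge0}f^{i}(\mathrm{Atoms}(\mu_\infty))$, which is possible for a generic choice and forces $\mu_\infty\bigl(\bigcup_{i\ge0}(f^i)^{-1}(\partial\mathcal P)\bigr)=0$; the point $c$ itself, being unmovable, is treated by hand by arguing as in \cite{Ts} that an atom of $\mu_\infty$ on the grand orbit of the discontinuity is incompatible with $f$-invariance together with $Sf\le 0$. Because $c\in\partial\mathcal P$, every perturbed branch $f_t$ is continuous and monotone on each element of $\mathcal P$, so $\mathcal P\times T^{\mathbb N}$ is a generating partition for $F$ relative to $\mathscr A$ — here one uses, exactly as in the deterministic one–dimensional case where the lap partition is generating, the absolute continuity of $\mu_\epsilon$. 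By the relative Kolmogorov–Sinai theorem, subadditivity, and then discarding the conditioning on $\mathscr A$ (which only decreases entropy),
\[
h_{\mu_\epsilon}(\theta_\epsilon)=h_{m_\epsilon}\!\left(F,\mathcal P\times T^{\mathbb N}\mid\mathscr A\right)\le \frac1n\,H_{m_\epsilon}\!\Bigl(\textstyle\bigvee_{i=0}^{n-1}F^{-i}(\mathcal P\times T^{\mathbb N})\Bigr)\qquad\text{for every }n\ge 1 .
\]

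To conclude, note that $\mathcal R_n:=\bigvee_{i=0}^{n-1}F^{-i}(\mathcal P\times T^{\mathbb N})$ is a \emph{fixed} finite partition whose topological boundary meets the support $I\times\{\underline 0\}$ of $m_\infty$ only inside $\bigcup_{i=0}^{n-1}(f^i)^{-1}(\partial\mathcal P)\times\{\underline 0\}$, which is $m_\infty$-null by the choice of $\mathcal P$. Hence the Shannon entropy of this fixed partition is continuous at $m_\infty$ along $m_\epsilon$, giving
\[
\limsup_{\epsilon\to0}\frac1n\,H_{m_\epsilon}(\mathcal R_n)=\frac1n\,H_{m_\infty}(\mathcal R_n)=\frac1n\,H_{\mu_\infty}\!\Bigl(\textstyle\bigvee_{i=0}^{n-1}(f^i)^{-1}\mathcal P\Bigr).
\]
Combining the two displays yields $\limsup_{\epsilon\to0}h_{\mu_\epsilon}(\theta_\epsilon)\le\frac1n H_{\mu_\infty}\bigl(\bigvee_{i=0}^{n-1}(f^i)^{-1}\mathcal P\bigr)$ for all $n$, and letting $n\to\infty$ gives $\limsup_{\epsilon\to0}h_{\mu_\epsilon}(\theta_\epsilon)\le h_{\mu_\infty}(f,\mathcal P)\le h_{\mu_\infty}(f)$, which is the claim.

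The step I expect to be the genuine obstacle is the reduction to a single fixed partition: proving that $\mathcal P\times T^{\mathbb N}$ generates relative to $\mathscr A$ — equivalently, that the supremum over partitions defining $h_{\mu_\epsilon}(\theta_\epsilon)$ is already realized there, which is exactly what allows the $\limsup$ in $\epsilon$ to be pushed inside — and, entangled with it, controlling the grand orbit of the discontinuity $c$ under $\mu_\infty$. Everything else is bookkeeping with the entropy of a fixed finite partition under weak-$*$ convergence of measures that do not charge its boundary. Both delicate points are handled precisely as in Tsujii's proof of Theorem B in \cite{Ts}.
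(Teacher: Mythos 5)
Your proposal is correct and follows essentially the same route as the paper, which offers no argument of its own beyond citing Tsujii's Theorem B and invoking semicontinuity of entropy --- precisely the scheme you spell out ($\mu_\epsilon\times\theta_\epsilon^{\mathbb N}\rightharpoonup\mu_\infty\times\delta_{\underline 0}$, a fixed lap partition with $c$ among its endpoints whose boundary and its $f$-preimages are $\mu_\infty$-null, portmanteau for the fixed finite joins, and the relative-generator step handled as in \cite{Ts}). Two cosmetic quibbles only: $f$-invariance alone already gives $\mu_\infty(\{c\})=\mu_\infty\bigl(f^{-i}(\{c\})\bigr)=0$, so no Schwarzian input is needed there, and the generator step rests on the fact that lap partitions compute the (fibered) entropy of piecewise monotone maps rather than on absolute continuity of $\mu_\epsilon$.
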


\subsection{Proof of Theorem 4}

To prove Theorem 4, we shall prove the following theorem which is a randomized version of \cite{Ts1}[Theorem 3]. Note that the Ruelle inequality asserts that $h_{\mu_{\infty}}(f) \leq \int \chi^+(x) d \mu_{\infty} $.

\begin{thm}
Under the assumption of Theorem 4, we have
\[ \limsup_{\epsilon \to 0+} \int \chi^+(x; \theta_{\epsilon}) d \mu_{\epsilon}(x) \geq \int \chi^+(x) d \mu_{\infty} (x).
\]
\end{thm}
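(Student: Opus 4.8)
The plan is to rewrite both sides of the inequality in terms of integrals of $\log|Df|$ and then to establish a lower‑semicontinuity statement for $\int\log|Df|$ along the family $\{\mu_\epsilon\}$, the crucial input being that small additive absolutely continuous noise does not destroy the slow recurrence of $c_1^\pm$ to $c$. \emph{Step 1 (the left‑hand side).} Since $f_t(x)=f(x)+t$ is a translation, $Df_t\equiv Df$, so for every random orbit $\log|Df_{\underline t}^n(x)|=\sum_{i=0}^{n-1}\log|Df(f_{\underline t}^i(x))|$. The observable $\Phi(x,\underline t):=\log|Df(x)|$ lies in $L^1(\mu_\epsilon\times\theta_\epsilon^{\mathbb N})$: it is bounded above, while by Lemma 3.1 one has $\Phi\ge C_0\log|x-c|$ near $c$, and $\mu_\epsilon$ is absolutely continuous with density in $L^\infty$ (by $(\mathrm{A1})$–$(\mathrm{A2})$ and \cite{S}[Lemma 3.2]) and $\log|x-c|$ is Lebesgue integrable. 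As $\mu_\epsilon\times\theta_\epsilon^{\mathbb N}$ is ergodic for $F$ (recalled above), Birkhoff's ergodic theorem gives $\chi(x;\theta_\epsilon)=\int\log|Df|\,d\mu_\epsilon=:\Lambda(\epsilon)$ for $\mu_\epsilon$‑a.e. $x$, whence $\int\chi^+(x;\theta_\epsilon)\,d\mu_\epsilon(x)=\max\{0,\Lambda(\epsilon)\}$.

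\emph{Step 2 (the right‑hand side).} Since $\mu_\infty$ is $f$‑invariant, Birkhoff (applied to $\log|Df|$) and the ergodic decomposition $\mu_\infty=\int\mu_\omega\,d\mathbb P(\omega)$ give $\int\chi^+\,d\mu_\infty=\int\max\{0,\chi_{\mu_\omega}(f)\}\,d\mathbb P(\omega)$, with $\chi_{\mu_\omega}(f)=\int\log|Df|\,d\mu_\omega$ (possibly $-\infty$). By Przytycki's theorem (\cite{Pr}[Theorem B], as invoked in the proof of Theorem 2) any ergodic component with $\chi_{\mu_\omega}(f)<0$ is carried by a strictly attracting periodic‑like orbit, hence contributes $0$ to $\int\chi^+\,d\mu_\infty$; moreover, by the attractor classification of Keller–St.\ Pierre recalled in the Preliminaries, the positive‑measure basin of such an orbit forces a zero‑noise limit to be supported on attracting periodic‑like orbits whenever it charges one. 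Thus either $\int\chi^+\,d\mu_\infty=0$, in which case the assertion is immediate since $\max\{0,\Lambda(\epsilon)\}\ge 0$, or $\mu_\infty$ charges no attracting periodic‑like orbit and $\int\chi^+\,d\mu_\infty=\max\{0,\int\log|Df|\,d\mu_\infty\}$. In the latter case it suffices to prove
\[ \liminf_{\epsilon\to 0^+}\int\log|Df|\,d\mu_\epsilon\ \ge\ \int\log|Df|\,d\mu_\infty . \]

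\emph{Step 3 (lower semicontinuity via uniform integrability).} Setting $\log|Df(c)|:=-\infty$, the function $\log|Df|$ is upper semicontinuous and bounded above, so $\mu_\epsilon\rightharpoonup\mu_\infty$ already yields $\limsup_\epsilon\int\log|Df|\,d\mu_\epsilon\le\int\log|Df|\,d\mu_\infty$; hence the displayed inequality is equivalent to the convergence $\int\log|Df|\,d\mu_\epsilon\to\int\log|Df|\,d\mu_\infty$, and by Lemma 3.1 it is enough to prove the uniform integrability of the singular part near $c$:
\[ \lim_{\delta\to 0}\ \sup_{0<\epsilon\le\epsilon_1}\ \int_{\mathcal C_\delta}\bigl(-\log|x-c|\bigr)\,d\mu_\epsilon(x)=0 \]
for some small $\epsilon_1>0$. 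Granting this, write $\log|Df|=\varphi_N+(\log|Df|-\varphi_N)$ with $\varphi_N:=\max\{\log|Df|,-N\}$ continuous and bounded: then $\int\varphi_N\,d\mu_\epsilon\to\int\varphi_N\,d\mu_\infty$ by weak‑star convergence, the correction is supported in some $\mathcal C_{\delta_N}$ with $\delta_N\to 0$ and is uniformly small by the displayed limit (and by monotone convergence for $\mu_\infty$), and one lets $N\to\infty$.

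\emph{The main obstacle.} The technical core is the last displayed bound, i.e.\ that slow recurrence survives small additive absolutely continuous noise uniformly in $\epsilon$; this is where Theorem 1 (equivalently Lemma 3.2 together with the slow recurrence of $c_1^\pm$) enters. Using the stationary identity and the ergodicity of $\mu_\epsilon\times\theta_\epsilon^{\mathbb N}$, $\int_{\mathcal C_\delta}(-\log|x-c|)\,d\mu_\epsilon$ equals the $\mu_\epsilon\times\theta_\epsilon^{\mathbb N}$‑average of the Birkhoff sums $\tfrac1n\sum_{i<n}\mathbf 1_{\mathcal C_\delta}(f_{\underline t}^i(x))\,(-\log|f_{\underline t}^i(x)-c|)$, so one must bound the time a random orbit spends deep inside $\mathcal C_\delta$. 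Following Tsujii \cite{Ts}, one tracks, for $\epsilon$ small relative to $\delta$, how a random orbit entering a scale‑$\delta$ neighbourhood of $c$ is ejected and shadows the deterministic dynamics until it can return, re‑entry being governed by the recurrence of the noisy critical‑value orbits, which for $\epsilon$ small shadow those of $c_1^\pm$, whose deterministic recurrence to $c$ is controlled by slow recurrence. When $f$ is infinitely renormalizable this is the random analogue of the estimate in the proof of Theorem 1: the nested cycles $C_k$ have first‑return times $S_k\ge 2^k$ while $-\log|C_k|\lesssim k$, and for $\epsilon<|C_{k_0}|$ a random orbit still cannot re‑enter $C_k$ with $k\le k_0$ much more often than every $S_k$ iterates because entering $C_k$ forces a passage near the hyperbolic periodic boundary of $C_{k-1}$; summing over $k\ge k_0(\delta)$ then gives a bound tending to $0$ as $\delta\to0$, uniformly in $\epsilon$. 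Making this shadowing / return‑time control rigorous — essentially a large‑deviation estimate for the skew product $F$, uniform in the noise level — is the step I expect to require the most effort.
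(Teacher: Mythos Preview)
Your overall plan coincides with the paper's: rewrite both sides as integrals of $\log|Df|$ via ergodicity of $\mu_\epsilon\times\theta_\epsilon^{\mathbb N}$ and the ergodic decomposition of $\mu_\infty$, then establish lower semicontinuity of $\epsilon\mapsto\int\log|Df|\,d\mu_\epsilon$ by controlling the contribution from $\mathcal C_\delta$ through a shadowing argument in which random orbits entering near $c$ track the deterministic orbit of $c_1^\pm$, following Tsujii.

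The paper's execution, however, differs from your sketch at precisely the point you flag as the main obstacle, and the device it uses is one your plan is missing. You aim for uniform integrability $\lim_{\delta\to 0}\sup_{0<\epsilon\le\epsilon_1}\int_{\mathcal C_\delta}(-\log|x-c|)\,d\mu_\epsilon=0$, but the shadowing argument only works when the entry point satisfies $|x-c|\gg\epsilon$: if $|x-c|$ is of order $\epsilon$ or smaller, the additive noise dominates the drift $|f(x)-c_1^\pm|\asymp|x-c|^\alpha$ and no tracking of $c_1^\pm$ is possible. Your return-time heuristic has the same defect --- once $|C_k|\lesssim\epsilon$ the random orbit moves freely in and out of $C_k$ and the lower bound $S_k$ on return times is lost. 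The paper's key idea is to split at the $\epsilon$-dependent scale $|x-c|=\epsilon^2$. On $\{|x-c|<\epsilon^2\}$ only the density bound $|d\mu_\epsilon/d{\rm Leb}|\le d_0/\epsilon$ is used, giving $\int_{|x-c|<\epsilon^2}(-\log|Df|)\,d\mu_\epsilon=O(\epsilon\log(1/\epsilon))$ (Lemma~4.1). On $\{\epsilon^2<|x-c|<\delta\}$ the noise satisfies $\epsilon<|x-c|^{1/2}$, so the random orbit does shadow $f^n(c_1^\pm)$ for $\asymp -\log|x-c|$ steps (Lemma~4.2), and the contribution is bounded by any prescribed $\gamma>0$ uniformly in $\epsilon$ (Lemma~4.3, Corollary~4.4). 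Accordingly, the paper proves only $\lim_{\delta\to 0}\limsup_{\epsilon\to 0}\int_{|x-c|<\delta}(-\log|Df|)\,d\mu_\epsilon=0$, which already suffices, and which avoids your stronger $\sup_\epsilon$ claim. Finally, the shadowing step (Lemma~4.2) does not use the renormalization structure $(C_k,S_k)$ at all; it relies only on slow recurrence and the zero Lyapunov exponent at $c_1^\pm$ (Theorems~1 and~2).
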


We shall need several lemmas for preparation. Let $\mathcal P$ denote the set of Borel probability measures on $I$ and let $\mathcal T_{\epsilon} : \mathcal P \to \mathcal P$ be defined as
\[
\mathcal T_{\epsilon} m(A) = \int_{-\epsilon}^{\epsilon} m(f_t^{-1} (A)) d \theta_{\epsilon}(t) \mbox{ for each Borel set } A \subset I.
\]
Note that the stationary measure $\mu_{\epsilon}$ for $\theta_{\epsilon}$ is just the fixed point of $\mathcal T_{\epsilon}$. Since $|\frac{{\rm d\theta_{\epsilon}}}{{\rm d Leb}}|_{L^{\infty}} <\frac{d_0}{\epsilon}$, then
\[
P_{\epsilon} (A | x) = \int_A \theta_{\epsilon} (y - f(x)) dy \leq \frac{d_0}{\epsilon} |A|.
\]
Then for each $m \in \mathcal P$ and each Borel set $A \subset I$, we have
\[
\mathcal T_{\epsilon} m(A) = \int_0^1 P_{\epsilon} (A | x) d m(x) \leq \frac{d_0}{\epsilon} |A|.
\]
In particular, this shows that the stationary measure $\mu_{\epsilon}$ for $\theta_{\epsilon}$ is absolutely continuous with $|\frac{{\rm d\mu_{\epsilon}}}{{\rm d Leb}}|_{L^{\infty}} <\frac{d_0}{\epsilon}$.

\begin{lem}
\[ \lim_{\epsilon \to 0+} \int_{|x -c| < \epsilon^2} \log Df(x) d\mu_{\epsilon} =0.
\]
\end{lem}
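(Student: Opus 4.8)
The goal is to show that the contribution to $\int \log|Df|\,d\mu_\epsilon$ coming from the deep critical region $\{|x-c|<\epsilon^2\}$ is negligible as $\epsilon\to 0$. The plan is to exploit two facts established earlier: the absolute continuity bound $\bigl|\tfrac{d\mu_\epsilon}{d\mathrm{Leb}}\bigr|_{L^\infty}<d_0/\epsilon$ proved just above, and the local estimate of Lemma 3.1(2), namely $\log|Df(x)|\ge C_0\log|x-c|$ on a fixed neighborhood $U$ of $c$. Since $\log|Df|<0$ near $c$ (the map is contracting there) and $\log|Df|$ is bounded above on $[0,1]$, only the negative part matters, so it suffices to bound $\int_{|x-c|<\epsilon^2}-\log|Df(x)|\,d\mu_\epsilon$ from above and show it tends to $0$.

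First I would fix $\epsilon$ small enough that $\{|x-c|<\epsilon^2\}\subset U$. On this set Lemma 3.1(2) gives $-\log|Df(x)|\le -C_0\log|x-c|=C_0\bigl(-\log|x-c|\bigr)$, which is a nonnegative integrable function with a logarithmic (hence integrable) singularity at $c$. Combining this with the density bound $d\mu_\epsilon\le (d_0/\epsilon)\,d\mathrm{Leb}$, I get
\[
\int_{|x-c|<\epsilon^2}-\log|Df(x)|\,d\mu_\epsilon \;\le\; C_0\cdot\frac{d_0}{\epsilon}\int_{|x-c|<\epsilon^2}\bigl(-\log|x-c|\bigr)\,dx.
\]
The remaining step is the elementary one-dimensional computation $\int_{|x-c|<\epsilon^2}(-\log|x-c|)\,dx = 2\int_0^{\epsilon^2}(-\log s)\,ds = 2\epsilon^2\bigl(1-\log(\epsilon^2)\bigr)=2\epsilon^2\bigl(1+2\log(1/\epsilon)\bigr)$. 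Substituting back, the bound becomes $C_0 d_0\cdot 2\epsilon\bigl(1+2\log(1/\epsilon)\bigr)$, and since $\epsilon\log(1/\epsilon)\to 0$ as $\epsilon\to 0^+$, the whole expression tends to $0$. Together with the trivial bound on the positive part of $\log|Df|$ over a set of $\mu_\epsilon$-measure at most $(d_0/\epsilon)\cdot 2\epsilon^2 = 2d_0\epsilon\to 0$, this gives the claimed limit.

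The main obstacle — and it is a mild one — is making sure the density bound is applied only where it is actually available, and that the factor $1/\epsilon$ from the density is killed by the $\epsilon^2$ width of the integration region; the logarithmic factor is harmless because it is beaten by any positive power of $\epsilon$. No dynamics beyond Lemma 3.1 and the already-derived $L^\infty$ bound on $\mu_\epsilon$ is needed here: this lemma is purely a soft consequence of absolute continuity with a controlled constant and the non-flatness of the critical point.
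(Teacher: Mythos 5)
Your argument is correct and follows essentially the same route as the paper: apply the density bound $|\frac{d\mu_\epsilon}{d\mathrm{Leb}}|_{L^\infty} < d_0/\epsilon$, dominate $|\log|Df(x)||$ by $-C_0\log|x-c|$ via Lemma 3.1 on the region $\{|x-c|<\epsilon^2\}$, and compute the elementary integral to get a bound of order $\epsilon\log(1/\epsilon)\to 0$. Your extra remark about the positive part of $\log|Df|$ is harmless (near $c$ the derivative is small so that part is in fact absent), and otherwise the two proofs coincide.
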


\begin{proof}
By the remark before this lemma and Lemma 3.1, we have
\begin{align*}
\bigg| \int_{|x -c| < \epsilon^2} \log Df(x) d\mu_{\epsilon} \bigg| & \leq \int_{|x -c| < \epsilon^2} \big| \log Df(x) \big| \frac{d_0}{\epsilon} dx \\
& \leq -\frac{d_0 C_0}{\epsilon} \int_{|x -c| < \epsilon^2} \log |x - c| dx \\
& = -\frac{2 d_0 C_0}{\epsilon} \int_c^{c + \epsilon^2} \log |x - c| dx \\
& = 2 d_0 C_0 \epsilon (1 - 2 \log \epsilon).
\end{align*}
The last term tends to 0 as $\epsilon \to 0$. This finishes the proof.
\end{proof}

\begin{lem}
For any $K >1$ and any $0 < \xi \leq \frac{1}{2}$, there exists a constant $\delta = \delta(K, \xi) >0$, such that if $x \in[0, 1] \setminus O_f^-(c)$ and $\epsilon >0$ satisfying $\epsilon^2 < x -c =: \eta < \delta$, then for any $\underline t \in [-\epsilon, \epsilon]^{\mathbb N}$ and any positive integer $n \leq -K \log \eta$, we have
\[ 
|f^n_{\underline t}(x) - f^{n-1}(c_1^+)| < \xi |f^{n-1}(c_1^+) -c|.
\]
Similarly, if $\epsilon^2 < c - x  < \delta$, then we have
\[ 
|f^n_{\underline t}(x) - f^{n-1}(c_1^-)| < \xi |f^{n-1}(c_1^-) -c|.
\]
\end{lem}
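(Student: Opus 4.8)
The plan is to show that as long as $\eta = |x-c|$ is small and $n$ is not too large (bounded by $-K\log\eta$), the random orbit of $x$ stays in a small relative neighborhood of the deterministic orbit of the critical value $c_1^+$. The key quantitative input is that near the critical point the map contracts (since $\alpha>1$), while away from the critical point it expands only at a bounded exponential rate, so a perturbation introduced at each step of size $\leq\epsilon$ accumulates at most geometrically, and $\epsilon^2<\eta$ gives us a head start that survives the prescribed number $-K\log\eta$ of iterates.

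First I would record two constants. By the negative Schwarzian and non-flatness hypotheses there is a uniform bound $\Lambda>1$ with $|Df(y)|\leq\Lambda$ for all $y\in I\setminus\{c\}$ (smoothness of the branches on compact sets), and by Lemma 3.1(1) there are $0<a<b$ with $a|y-c|^{\alpha-1}<|Df(y)|<b|y-c|^{\alpha-1}$ on a neighborhood $U$ of $c$; shrinking $\delta$ we may assume $(c-\delta,c+\delta)\subset U$ and that $b\,\delta^{\alpha-1}<1$, so that one iterate of $f$ strictly contracts distance to $c$ inside this neighborhood. Next, by the mean value theorem applied to the branch containing $x$ and to the nearby branch containing $c^+$ (these are the same branch when $x>c$), we have $|f(x)-c_1^+|=|f(x)-\lim_{y\to c^+}f(y)|\leq b\,\eta^{\alpha-1}\cdot\eta = b\,\eta^{\alpha}$, which for $\delta$ small is $\ll\eta$; and $|c_1^+-c|$ is bounded below by a positive constant (non-triviality). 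This establishes the case $n=1$: $|f_{\underline t}(x)-c_1^+|\leq |f(x)-c_1^+|+\epsilon\leq b\eta^\alpha+\sqrt{\eta}$, and since $|c_1^+-c|$ is bounded below, this is $<\xi|c_1^+-c|$ for $\delta=\delta(\xi)$ small, noting $n=1\leq-K\log\eta$ is automatic for $\eta$ small.

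For the inductive step from $n-1$ to $n$, write $d_{n-1}=|f^{n-1}_{\underline t}(x)-f^{n-2}(c_1^+)|$ and suppose inductively $d_{n-1}<\xi|f^{n-2}(c_1^+)-c|$. Then $|f^{n-1}_{\underline t}(x)-c|\geq(1-\xi)|f^{n-2}(c_1^+)-c|$, so both points lie in a common region on which $f$ has derivative bounded by $\Lambda$ (or by the non-flat estimate, whichever applies), hence $|f(f^{n-1}_{\underline t}(x))-f^{n-1}(c_1^+)|\leq\Lambda\,d_{n-1}$ — here one must be a little careful and use the Koebe/negative-Schwarzian distortion bound along the branch rather than a crude derivative bound, so that the comparison constant does not blow up when $f^{n-2}(c_1^+)$ is itself close to $c$; the point is that $f^{n-2}(c_1^+)\in\mathcal O_f$ and the slow recurrence/bounded geometry already control how close such points can be. Adding the new perturbation, $d_n\leq\Lambda\,d_{n-1}+\epsilon$. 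Iterating from $d_1\leq b\eta^\alpha+\epsilon$ gives $d_n\leq\Lambda^{n}(b\eta^\alpha)+\epsilon\sum_{j=0}^{n-1}\Lambda^j\leq\Lambda^n(b\eta^\alpha+\epsilon\,\Lambda/( \Lambda-1))$. Since $n\leq-K\log\eta$ we have $\Lambda^n\leq\eta^{-K\log\Lambda}$, so $d_n\leq b\,\eta^{\alpha-K\log\Lambda}+C\epsilon\,\eta^{-K\log\Lambda}\leq b\,\eta^{\alpha-K\log\Lambda}+C\,\eta^{2-K\log\Lambda}$ using $\epsilon^2<\eta$. The main obstacle, and the place requiring genuine care, is precisely this: for the bound to be useful we need $\alpha-K\log\Lambda>0$ and $2-K\log\Lambda>0$, i.e. $K$ must be small relative to $1/\log\Lambda$. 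But the statement allows arbitrary $K>1$, so a crude derivative bound $\Lambda$ is not enough — one must instead exploit that the orbit $f^{j}(c_1^+)$ returns to the critical neighborhood only with the controlled frequency given by Theorem 1 (slow recurrence) and Lemma 3.2, so that the product of derivatives $\prod_{j}|Df(f^j(c_1^+))|$ along the first $n-1$ steps grows subexponentially in $n$, in fact like $e^{o(n)}$, since $\chi_f(c_1^+)=0$ by Lemma 3.3. Replacing $\Lambda^n$ by this subexponential factor, and using $n\leq-K\log\eta$ together with $\epsilon^2<\eta$, makes $d_n=o(1)$ as $\delta\to0$ uniformly in the allowed range, and then choosing $\delta$ small forces $d_n<\xi|f^{n-1}(c_1^+)-c|$ once we also note $|f^{n-1}(c_1^+)-c|$ is bounded below along the relevant (bounded, since $n\leq-K\log\eta$ and the orbit is recurrent) stretch — or, if it is not bounded below, absorbing the small exceptional steps into the slow-recurrence sum. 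Finally, $f^n_{\underline t}(x)=f_{t_n}(f^{n-1}_{\underline t}(x))$ differs from $f(f^{n-1}_{\underline t}(x))$ by at most $\epsilon<\sqrt\eta$, which is negligible compared to the already-established bound, completing the induction; the case $c-x\in(\epsilon^2,\delta)$ is identical with $c_1^+$ replaced by $c_1^-$.
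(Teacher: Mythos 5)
You have correctly identified the ingredients the paper actually uses — compare the perturbed orbit of $x$ with the deterministic orbit of $c_1^+$, use $\chi_f(c_1^\pm)=0$ (Lemma 3.3/Theorem 2) to make the derivative products along the critical orbit subexponential, and use slow recurrence (Theorem 1) to control how close $f^{n-1}(c_1^+)$ can come to $c$ — but the proposal stops exactly where the lemma's real work begins, and two of your bridging claims are not correct as stated. First, the per-step comparison constant between $|Df|$ at the random point and at the shadowed critical point cannot be an arbitrary fixed constant (whether a crude $\Lambda$, a Koebe constant, or $C(\xi)$): over $n\le -K\log\eta$ steps a fixed factor $C>1$ accumulates to $C^n=\eta^{-K\log C}$, which for large $K$ swamps the initial error of size $\sqrt\eta$ — the very failure mode you diagnosed for $\Lambda^n$ reappears. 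The paper's fix is that the inductive threshold is not $\xi$ but a smaller $\lambda=\lambda(K)$ chosen so that $3K\log C(\lambda)<1/10$, where $C(\lambda)\to1$ as $\lambda\to0$ comes from non-flatness (two points on the same side of $c$ at relative distance $<\lambda$ have derivative ratio $\le C(\lambda)$); the claim is proved with threshold $\lambda$ and one takes $\lambda<\xi$ at the end. Your induction, run with $\xi$ itself and an untuned distortion constant, would fail for large $K$.

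Second, your assertion that $|f^{n-1}(c_1^+)-c|$ is ``bounded below along the relevant (bounded) stretch'' is false: the stretch has length up to $-K\log\eta$, which is unbounded as $\delta\to0$ while $\delta$ must be uniform, and the critical orbit accumulates on $c$. The correct substitute — and the place where Theorem 1 actually enters — is the quantitative lower bound $|f^{n-1}(c_1^+)-c|\ge C_2\,e^{-n/(10K)}\ge C_2\,\eta^{1/10}$ for $n\le -K\log\eta$, and the conclusion then rests on an exponent comparison (roughly $\tau_{n+1}\le C_4\,C(\lambda)^{3n}\sqrt\eta\le C_4\,\eta^{1/2-1/10}$ against $C_2\,\eta^{1/10}$) which your sketch never performs; ``absorbing the small exceptional steps into the slow-recurrence sum'' is not an argument. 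Relatedly, propagating the noise terms $\epsilon<\sqrt\eta$ through the induction requires two-sided subexponential bounds $\left(C_1 C(\lambda)^n\right)^{-1}\le |Df^n(c_1^+)|\le C_1 C(\lambda)^n$, because the accumulated error involves ratios $|Df^n(c_1^+)|/|Df^{j}(c_1^+)|$ (the paper's normalization $\sigma_i=\tau_i/(C(\lambda)^{i-1}|Df^{i-1}(c_1^+)|)$ makes this bookkeeping explicit); your one-line ``replace $\Lambda^n$ by the subexponential factor'' only invokes the upper bound. So the strategy is the right one, but the proposal has genuine gaps at the quantitative core of the lemma.
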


\begin{proof}
Let $[[a, b]]$ denote the closed interval with endpoints $a, b$ without specifying their order. By non-flatness, for any $\lambda >0$ there exists a constant $C(\lambda) >1$ such that if $a, b \in (c, 1]$ or $a, b \in [0, c)$  and $|a-b| < \lambda |b-c|$, then for any $x, y \in [[a, b]]$,
\[  \frac{Df(x)}{Df(y)}  \leq C(\lambda).
\]
Moreover, $C(\lambda) \to 1$ as $\lambda \to 0$.

Fix $\lambda = \lambda(K) >0$ small enough such that $3 K \log C(\lambda) < 1/10$. By Theorem 2, there exists an integer $n_0=n_0(K)$ such that for all $ n \geq n_0$,
\[ \left|\frac{1}{n} \log Df^n(c_1^+) \right| < \log C(\lambda).
\] 
Then there exists a constant $C_1 = C_1(K) >1$ such that for any $n \geq 1$,
\[ \frac{1}{C_1 C(\lambda)^{n}} \leq Df^n(c_1^+) \leq  C_1 C(\lambda)^{n}. \eqno{(4.1)}
\]
By Theorem 1, there exists $\delta_1 =\delta_1(K) \in (0, 1)$ and $n_1 = n_1(K)$ such that if $n \geq n_1$, then 
\[
\frac{1}{n} \sum_{\substack{ 0 \leq i < n \\ f^i(c_1^+) \in \mathcal C_{\delta_1} }} \log d(f^i(c_1^+), c) > -\frac{1}{10K}.
\]
Hence 
\[
\log |f^{n-1}(c_1^+) -c| \geq \sum_{\substack{ 0 \leq i < n \\ f^i(c_1^+) \in \mathcal C_{\delta_1} }} \log d(f^i(c_1^+), c) > -\frac{n}{10K},
\]
which implies $|f^{n-1}(c_1^+) -c| > e^{-\frac{n}{10K}}$ provided that $|f^{n-1}(c_1^+) -c| < \delta_1$. Therefore there exists a constant $C_2=C_2(K) >0$ such that for any $n \geq 1$,
\[
|f^{n-1}(c_1^+) -c| \geq \min\{e^{-\frac{n}{10K}}, \delta_1\} > C_2 e^{-\frac{n}{10K}}. \eqno{(4.2)}
\]
Finally, let $C_3 = C_3(K) \in(0, 1)$ be a constant such that  $C_3 < C(\lambda) -1$, then for any $n \geq 1$,
\[ C(\lambda)^n = [1 + (C(\lambda)-1)]^n > (C(\lambda)-1)n > C_3 n. \eqno{(4.3)}
\]

\begin{claim}
For the fixed $\lambda$ given above, there exists a constant $\delta>0$, such that if $x \in[0, 1] \setminus O_f^-(c)$ and $\epsilon >0$ satisfying $\epsilon^2 < x -c =\eta < \delta$, then for any $\underline t \in [-\epsilon, \epsilon]^{\mathbb N}$ and any positive integer $n \leq -K \log \eta$, we have
\[ |f^n_{\underline t}(x) - f^{n-1}(c_1^+)| < \lambda |f^{n-1}(c_1^+) -c|. \eqno{(4.4)}
\]
\end{claim}

We will prove this claim by induction on $n$. For $n \geq 1$, let 
\[ \tau_n = |f^n_{\underline t}(x) - f^{n-1}(c_1^+)|.
\]
If $\delta$ is small enough, then by Lemma 3.1 we have
\[ \tau_1 = |f_{t_1}(x) -c_1^+ | = |f(x) - c_1^+ +t_1| \leq |x-c|^{\alpha} + \epsilon < \eta^{\alpha} + \sqrt{\eta} < 2 \sqrt{\eta}.
\]
Now assume that $(4.4)$ holds for $1 \leq i \leq n < -K \log \eta$, we will prove that it still holds for $n+1$ provided $\delta$ is sufficiently small.

Note that (4.4) implies that for any $1 \leq i \leq n$ from the induction step, the intervals $[[f^{i}_{\underline t}(x), f^{i-1}(c_1^+) ]]$ do not contain the singular point $c$. By the remark at the beginning of this proof and mean value theorem, 
\begin{align*}
\tau_{i+1} &= |f^{i+1}_{\underline t}(x) - f^{i}(c_1^+)| \leq |f^{i+1}_{\underline t}(x) -f(f^{i}_{\underline t}(x))| + |f(f^{i}_{\underline t}(x)) - f(f^{i-1}(c_1^+))|\\
& \leq \epsilon + Df(\xi_i) \tau_i \leq C(\lambda)  Df(f^{i-1}(c_1^+)) \tau_i + \epsilon,
\end{align*}
where $\xi_i \in [[f^{i}_{\underline t}(x), f^{i-1}(c_1^+) ]]$. For $i \geq 1$, let
\[
\sigma_i = \frac{\tau_i}{C(\lambda)^{i-1} D f^{i-1}(c_1^+)}, \ \epsilon_i =  \frac{\epsilon}{C(\lambda)^{i} D f^{i}(c_1^+)}.
\]
Then for $1 \leq i \leq n$,
\[ \sigma_{i+1} \leq \sigma_i + \epsilon_i.
\]
Since $\sigma_1 = \tau_1$, by (4.1) and (4.3), we have
\[
\sigma_{n+1} \leq \sigma_1 + \sum_{i=1}^n \epsilon_i < \tau_1 + nC_1 \epsilon \leq 2 \sqrt{\eta} + \frac{C_1}{C_3} C(\lambda)^n \sqrt{\eta}.
\]
By (4.1) again,
\[
\tau_{n+1} \leq C_1 C(\lambda)^{2n}\left(2 \sqrt{\eta} + \frac{C_1}{C_3} C(\lambda)^n \sqrt{\eta} \right) \leq C_4 C(\lambda)^{3n} \sqrt{\eta}
\]
for some constant $C_4 = C_4(K) >0$. Since $n < -K \log \eta$, by (4.2) and the fact that $3K \log C(\lambda) < 1/10$, we have
\[
\frac{\tau_{n+1}}{|f^n(c_1^+) -c|} \leq \frac{C_4 C(\lambda)^{3n} \sqrt{\eta}}{ C_2 e^{-\frac{n}{10K}}} \leq \frac{C_4 C(\lambda)^{-3K \log \eta} \sqrt{\eta}}{ C_2 e^{-\frac{1}{10K} \cdot 3K \log \eta}} \leq \frac{C_4}{C_2} \eta^{\frac{7}{10}}.
\]
If $\delta>0$ is small enough such that $\frac{C_4}{C_2} \eta^{\frac{7}{10}} < \lambda$, then 
\[ |f^{n+1}_{\underline t}(x) - f^{n}(c_1^+)| < \lambda |f^n(c_1^+) -c|.
\]
This finishes the induction step and the claim is proved. To prove this lemma, it suffices to assume that $\lambda < \xi$ from the beginning.

\end{proof}


\begin{lem}
For each $\gamma>0$, the following holds provided $\epsilon, \delta>0$ are sufficiently small: for each $x \in [0, 1]\setminus O_f^-(c)$ and $\underline t \in [-\epsilon, \epsilon]^{\mathbb N}$, then
\[
\limsup_{n \to \infty} \frac{1}{n} \sum_{\substack{0 \leq i < n \\ \epsilon^2 <|f_{\underline t}^i(x) -c| < \delta }} \log Df(f_{\underline t}^i(x)) \geq - \gamma. \eqno{(4.5)}
\]
\end{lem}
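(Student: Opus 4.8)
The plan is to deduce (4.5) from the slow recurrence of the critical values (Theorem 1) by using Lemma 4.3 to shadow the random orbit by the deterministic orbit of $c_1^{\pm}$ during bounded excursions near $c$, and then decomposing $[0,n)$ into disjoint such excursions. Fix $\gamma>0$ and choose the auxiliary constants in this order. First pick $\xi\in(0,\tfrac12]$ so small that $\log C(\xi)<\gamma/3$, where $C(\cdot)$ is the non-flat distortion constant from the beginning of the proof of Lemma 4.3. Next, using the uniform-in-$n$ form of the slow recurrence estimate established for $c_1^{\pm}$ in the proof of Theorem 1, pick $\beta>0$ with $C_0\beta<\gamma/3$ and a corresponding $\delta_1>0$ with $(c-\delta_1,c+\delta_1)\subset U$ such that
\[
\frac1m\sum_{\substack{0\le j<m\\ f^j(c_1^{\pm})\in\mathcal C_{\delta_1}}}-\log d(f^j(c_1^{\pm}),c)<\beta\qquad\text{for all }m\ge1,
\]
where $C_0$ and $U$ are as in Lemma 3.1. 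Then pick $K>1$ with $2C_0/K<\gamma/3$, then $\delta\in(0,\delta(K,\xi)]$ with $\delta\le(1-\xi)\delta_1$ and $(c-\delta,c+\delta)\subset U$, and finally $\epsilon>0$ with $\epsilon^2<\delta$; shrinking $\delta$ further we may assume $-K\log\delta\ge2$.

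Write $y_i=f^i_{\underline t}(x)$ and call $i$ a \emph{shell time} if $\epsilon^2<|y_i-c|<\delta$. If there are only finitely many shell times the conclusion is immediate, so assume there are infinitely many and build disjoint blocks greedily from the left: let $i_1$ be the first shell time; given $i_k$, set $\eta_k=|y_{i_k}-c|$, $p_k=\lfloor-K\log\eta_k\rfloor$, $B_k=[i_k,i_k+p_k)$, and let $i_{k+1}$ be the first shell time $\ge i_k+p_k$. Then the $B_k$ are pairwise disjoint, every shell time lies in exactly one $B_k$, and $2\le K\log(1/\delta)-1\le p_k\le -2K\log\epsilon$. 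Since $\epsilon^2<\eta_k<\delta$, Lemma 4.3 (its ``$\pm$'' version according to the side of $c$ containing $y_{i_k}$) gives, for $1\le m<p_k$,
\[
|y_{i_k+m}-f^{m-1}(c_1^{\pm})|<\xi\,|f^{m-1}(c_1^{\pm})-c|,
\]
so $y_{i_k+m}$ and $f^{m-1}(c_1^{\pm})$ lie on the same side of $c$ and their distances to $c$ agree up to the factor $1\pm\xi$; in particular, if $i_k+m$ is a shell time then $f^{m-1}(c_1^{\pm})\in\mathcal C_{\delta_1}$, and the non-flat distortion estimate gives $\log|Df(y_{i_k+m})|\ge\log|Df(f^{m-1}(c_1^{\pm}))|-\log C(\xi)$.

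Now estimate the contribution of one block. For the term $m=0$, Lemma 3.1(2) together with $-K\log\eta_k\le 2p_k$ gives $\log|Df(y_{i_k})|\ge C_0\log\eta_k\ge-\tfrac{2C_0}{K}p_k\ge-\tfrac{\gamma}{3}p_k$. For the remaining shell times in $B_k$, the displayed inequality, $\log|Df|\le0$ near $c$, Lemma 3.1(2), and the uniform slow recurrence bound (applied with $m=p_k$) yield
\[
\sum_{\substack{1\le m<p_k\\ i_k+m\ \text{shell}}}\log|Df(y_{i_k+m})|
\;\ge\;C_0\!\!\sum_{\substack{0\le j<p_k\\ f^j(c_1^{\pm})\in\mathcal C_{\delta_1}}}\!\!\log d(f^j(c_1^{\pm}),c)\;-\;p_k\log C(\xi)
\;\ge\;-C_0\beta p_k-p_k\log C(\xi)\;\ge\;-\tfrac{2\gamma}{3}p_k,
\]
so $\sum_{i\in B_k\ \text{shell}}\log|Df(y_i)|\ge-\gamma p_k$. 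Summing over the blocks $B_1,\dots,B_{K(n)}$ meeting $[0,n)$, and noting that $B_1,\dots,B_{K(n)-1}$ are disjoint subsets of $[0,n)$ while $p_{K(n)}\le-2K\log\epsilon$ is a constant, we get
\[
\sum_{\substack{0\le i<n\\ \epsilon^2<|y_i-c|<\delta}}\log|Df(y_i)|\;\ge\;-\gamma\sum_{k=1}^{K(n)}p_k\;\ge\;-\gamma\bigl(n+p_{K(n)}\bigr),
\]
whence $\liminf_{n\to\infty}\tfrac1n\sum\ge-\gamma$, which is (4.5) — indeed with $\liminf$ in place of $\limsup$. The substantive input is Lemma 4.3 (already established); the only place needing real care here is the bookkeeping of the greedy decomposition: that the blocks are disjoint and cover all shell times, and that the last, possibly incomplete, block is negligible after dividing by $n$.
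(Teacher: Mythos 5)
Your argument is correct, and it follows the paper's overall architecture — greedy decomposition of the orbit into excursion blocks of length $\approx -K\log\eta$ started at each ``shell'' time, the shadowing lemma (which is Lemma 4.2 in the paper, not 4.3; the statement you are proving is the paper's Lemma 4.3), the non-flatness distortion constant $C(\xi)$, and Lemma 3.1 for the initial term of each block — but the key input controlling the bulk of each block is genuinely different. The paper compares the random orbit to the critical orbit over the \emph{entire} block and invokes Theorem 2 ($\chi_f(c_1^{\pm})=0$) to bound $-\log|Df^{n_i'-n_i}(c_1^{\pm})|\le \tfrac{\gamma}{3}(n_i'-n_i)$ for long blocks; you instead keep the sum restricted to shell times, push each such term to the shadow point via $C(\xi)$, bound it by $C_0\log d(f^{m-1}(c_1^{\pm}),c)$ via Lemma 3.1(2), and then use the uniform-in-$n$ slow recurrence estimate that the proof of Theorem 1 actually provides (with threshold $\delta_1$ chosen so that $\delta\le(1-\xi)\delta_1$ forces the shadow points of shell times into $\mathcal C_{\delta_1}$). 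This buys two things: you never need Theorem 2, only Theorem 1 and Lemma 3.1, and you avoid the paper's step of enlarging the restricted sum to all times in the block (which implicitly requires the non-shell terms to have $-\log|Df|\ge 0$); you also obtain the conclusion with $\liminf$ rather than $\limsup$, which is slightly stronger. The price is that you must quote the uniform ``for all $n>1$'' form of slow recurrence from Theorem 1's proof rather than its stated limit form, which is legitimate since the paper proves exactly that. The only blemishes are cosmetic: the claim $2\le K\log(1/\delta)-1$ needs $-K\log\delta\ge 3$ rather than $\ge 2$ (harmless, since all you use is $p_k\ge 2$, which already follows from $-K\log\eta_k>2$), and the lemma labels are shifted by one relative to the paper.
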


\begin{proof}
Let $K >0$ and $0 < \xi < \frac{1}{2}$ be constants to be determined and assume that $\delta \leq \delta(K, \xi)$ where $ \delta(K, \xi) > 0$ is given by the previous lemma. We define a sequence of integers $n_0 < n_1 < \ldots $ as follows. Let $n_0 = \min\{ k \geq 0: \epsilon^2 <|f_{\underline t}^k(x) -c| < \delta\}$. For $i \geq 0$, define $\eta_i = |f_{\underline t}^{n_i}(x) -c| < \delta$, $n_i' = n_i + [ -K \log \eta_i]$ and define inductively
\[
n_{i+1} = \min \{k > n_i' : \epsilon^2 <|f_{\underline t}^i(x) -c| < \delta \}.
\]
We may assume that $n_i$ are well-defined for all $i \geq 0$, for otherwise (4.5) is obvious. Without loss of generality, we also assume that whenever $\epsilon^2 <|f_{\underline t}^{n_i}(x) -c| < \delta$, $f_{\underline t}^{n_i}(x)$ are alway on the right side of the singular point $c$.

By Lemma 4.2, for each $i \geq 0$ and $n_i +1 \leq j \leq n_i'$, we have 
\[
|f_{\underline t}^j(x) -f^{j-n_i-1}(c_1^+)| < \xi |f^{j-n_i-1}(c_1^+) -c|.
\]
Then there exists a constant $C(\xi) >0$ such that 
\[
\frac{Df(f_{\underline t}^j(x))}{Df(f^{j-n_i-1}(c_1^+))} \leq C(\xi),
\]
where $C(\xi) \to 1$ as $\xi \to 0$. So for each $M \geq 1$, 
\begin{align*}
&\frac{1}{n_M} \sum_{\substack{j=0 \\ \epsilon^2 <|f_{\underline t}^j(x) -c| < \delta }}^{n_M-1} -\log Df(f_{\underline t}^j(x)) = \frac{1}{n_M} \sum_{i=0}^{M-1} \sum_{\substack{ j =n_i \\ \epsilon^2 <|f_{\underline t}^j(x) -c| < \delta }}^{n_{i+1}} -\log Df(f_{\underline t}^j(x))\\
& = \frac{1}{n_M} \sum_{i=0}^{M-1} \sum_{\substack{ j =n_i \\ \epsilon^2 <|f_{\underline t}^j(x) -c| < \delta }}^{n_i'} 
-\log Df(f_{\underline t}^j(x))\\
& \leq  \frac{1}{n_M} \sum_{i=0}^{M-1} \left[  -\log Df(f_{\underline t}^{n_i}(x))  + \sum_{j=n_i+1}^{n_i'} \left(  -\log  Df(f^{j-n_i-1}(c_1^+)) -\log \frac{Df(f_{\underline t}^j(x))}{Df(f^{j-n_i-1}(c_1^+))}   \right)  \right] \\
& \leq \frac{1}{n_M} \sum_{i=0}^{M-1} \left[  -\log Df(f_{\underline t}^{n_i}(x))  -\log  Df^{n_i'-n_i}(c_1^+)
-\sum_{j=n_i+1}^{n_i'}  \log \frac{Df(f_{\underline t}^j(x))}{Df(f^{j-n_i-1}(c_1^+))}    \right].
\end{align*}
By Lemma 3.1, 
\[
-\log Df(f_{\underline t}^{n_i}(x)) \leq -C_0 \log \eta_i \leq \frac{C_0}{K} (n_{i+1} -n_i) ,
\]
where $C_0$ depending only on $f$. Choose $K$ large enough, then
\[
-\log Df(f_{\underline t}^{n_i}(x)) \leq \frac{\gamma}{3} (n_{i+1} -n_i)
\]
provided $\delta$ is small enough. By Theorem 2, $\chi(c_1^+) =0 = \chi(c_1^-)$. Note that if $\delta$ is small enough, then $n_{i+1} -n_i \geq -K \log \eta_i > -K \log \delta$ is very large. Therefore
\[
- \log Df^{n_i'-n_i}(c_1^+)  \leq \frac{\gamma}{3} (n_i'-n_i) < \frac{\gamma}{3}(n_{i+1} -n_i ).
\]
Finally choosing $\xi>0$ small enough such that $\log C(\xi) \leq \gamma/3$. Therefore,
\[
\frac{1}{n_M} \sum_{\substack{j=0 \\ \epsilon^2 <|f_{\underline t}^j(x) -c| < \delta }}^{n_M-1} -\log Df(f_{\underline t}^j(x)) \leq \gamma.
\] 
This finishes the proof.
\end{proof}

\begin{cor}
\[
\lim_{\delta \to 0+ } \limsup_{\epsilon \to 0+} \int_{\epsilon^2 < |x -c| < \delta} \log Df(x) d\mu_{\epsilon} =0.
\]
\end{cor}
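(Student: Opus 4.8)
The plan is to derive Corollary 4.1 from Lemma 4.3 by converting the pointwise Birkhoff averages appearing there into the integral against $\mu_\epsilon$, exploiting that $\mu_\epsilon \times \theta_\epsilon^{\mathbb N}$ is an ergodic measure for the skew product $F$. I would fix $\gamma > 0$ and take $\delta, \epsilon > 0$ small enough that the conclusion of Lemma 4.3 holds and, in addition, $\{\,\epsilon^2 < |x-c| < \delta\,\}$ lies in the neighborhood $U$ of Lemma 3.1 and $\log b + (\alpha-1)\log\delta < 0$. On $I \times T^{\mathbb N}$ define $\phi(x, \underline t) = 1_{\{\epsilon^2 < |x-c| < \delta\}}(x)\,\log|Df(x)|$, which depends only on the first coordinate. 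On the set $\{\epsilon^2 < |x-c| < \delta\}$, Lemma 3.1 gives $2C_0\log\epsilon \le C_0\log|x-c| \le \log|Df(x)| \le \log b + (\alpha-1)\log\delta$, so for fixed $\epsilon, \delta$ the function $\phi$ is bounded, hence $(\mu_\epsilon \times \theta_\epsilon^{\mathbb N})$-integrable, with $\int \phi \, d(\mu_\epsilon \times \theta_\epsilon^{\mathbb N}) = \int_{\epsilon^2 < |x-c| < \delta} \log|Df(x)| \, d\mu_\epsilon(x)$.

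Since $\mu_\epsilon$ is the unique stationary measure, $\mu_\epsilon \times \theta_\epsilon^{\mathbb N}$ is ergodic for $F$ (as recalled above), so Birkhoff's ergodic theorem applied to $\phi$ and $F$ yields, for $(\mu_\epsilon \times \theta_\epsilon^{\mathbb N})$-a.e.\ $(x, \underline t)$,
\[
\lim_{n\to\infty} \frac{1}{n} \sum_{i=0}^{n-1} \phi\bigl(F^i(x, \underline t)\bigr) \;=\; \int_{\epsilon^2 < |x-c| < \delta} \log|Df| \, d\mu_\epsilon .
\]
Because $F^i(x, \underline t) = (f_{\underline t}^i(x), \sigma^i \underline t)$ and $\phi$ ignores the second coordinate, the average on the left is exactly $\frac{1}{n}\sum_{0 \le i < n,\, \epsilon^2 < |f_{\underline t}^i(x)-c| < \delta} \log|Df(f_{\underline t}^i(x))|$. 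Moreover $(\mu_\epsilon \times \theta_\epsilon^{\mathbb N})$-a.e.\ $(x,\underline t)$ has $x \notin O_f^-(c)$ — this pre-orbit is countable, hence $\mu_\epsilon$-null since $\mu_\epsilon$ is absolutely continuous — and $\underline t \in [-\epsilon, \epsilon]^{\mathbb N}$, so Lemma 4.3 applies and forces $\limsup_n (\cdots) \ge -\gamma$; as the Birkhoff limit exists it equals this $\limsup$, whence $\int_{\epsilon^2 < |x-c| < \delta} \log|Df|\, d\mu_\epsilon \ge -\gamma$ for all sufficiently small $\epsilon, \delta$.

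For the matching upper bound I would simply note that the choice of $\delta$ makes $\log|Df(x)| \le \log b + (\alpha-1)\log\delta < 0$ throughout $\{\epsilon^2 < |x-c| < \delta\}$, so $\int_{\epsilon^2 < |x-c| < \delta} \log|Df|\, d\mu_\epsilon \le 0$. Combining the two estimates: for every $\gamma > 0$ there is $\delta_0(\gamma) > 0$ such that for all $\delta < \delta_0(\gamma)$ one has $-\gamma \le \limsup_{\epsilon \to 0+} \int_{\epsilon^2 < |x-c| < \delta} \log|Df|\, d\mu_\epsilon \le 0$; letting $\delta \to 0+$ and then $\gamma \to 0+$ gives the stated double limit. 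I expect the only delicate points to be the integrability of $\phi$ for fixed $\epsilon, \delta$ (handled by the two-sided bound) and checking that the null set excluded by Birkhoff's theorem is compatible with the pointwise hypothesis of Lemma 4.3; neither is a genuine obstacle, so the argument should be short.
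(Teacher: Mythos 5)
Your argument is correct and is essentially the paper's own proof: both apply Birkhoff's ergodic theorem to the truncated observable $1_{\{\epsilon^2<|x-c|<\delta\}}\log|Df|$ for the ergodic skew-product measure $\mu_{\epsilon}\times\theta_{\epsilon}^{\mathbb N}$, identify the time average with $\int_{\epsilon^2<|x-c|<\delta}\log|Df|\,d\mu_{\epsilon}$, and then invoke Lemma 4.3. You merely make explicit some points the paper leaves implicit (integrability via Lemma 3.1, the $\mu_{\epsilon}$-nullity of $O_f^-(c)$, and the upper bound from $\log|Df|<0$ near $c$), which is fine.
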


\begin{proof}
Recall that $\mu_{\epsilon}$ is ergodic in the sense that $\mu_{\epsilon} \times \theta_{\epsilon}^{\mathbb N}$ is ergodic for the skew product $F(x, \underline t)$. Let $\varphi(x, \underline t) = \log Df(x)$ and $E = \{(x, \underline t) :  \epsilon^2 < |x -c| < \delta\}$. Then $\chi_E \cdot \varphi$ is integrable. By Birkhoff's Ergodic Theorem, for $\mu_{\epsilon} \times \theta_{\epsilon}^{\mathbb N}$-a.e. $(x, \underline t)$,
\[
\lim_{n \to \infty} \frac{1}{n} \sum_{i=0}^{n-1} \chi_E \cdot \varphi(F^i(x, \underline t)) = \int \chi_E \cdot \varphi d\mu_{\epsilon} \times \theta_{\epsilon}^{\mathbb N}.
\]
Equivalently,
\[
\lim_{n \to \infty} \frac{1}{n}  \sum_{\substack{0 \leq i < n \\ \epsilon^2 <|f_{\underline t}^i(x) -c| < \delta }} \log Df(f_{\underline t}^i(x)) = \int_{\epsilon^2 < |x -c| < \delta} \log Df(x) d\mu_{\epsilon}.
\]
Then this corollary follows from Lemma 4.3.
\end{proof}

\begin{proof}[{\bf Proof of Theorem 5}]
By Birkhoff's Ergodic Theorem and Lemma 4.1, for any $\delta >0$, we have
\begin{align*}
\limsup_{\epsilon \to 0+} & \int \chi^+(x; \theta_{\epsilon}) d \mu_{\epsilon}(x) \\
&\geq \limsup_{\epsilon \to 0+} \int_{|x-c| < \delta} \log Df(x) d \mu_{\epsilon} +  \lim_{\epsilon \to 0+} \int_{|x-c| \geq \delta} \log Df(x) d \mu_{\epsilon}\\
&= \limsup_{\epsilon \to 0+} \int_{\epsilon^2 < |x-c| < \delta} \log Df(x) d \mu_{\epsilon} + \int_{|x-c| \geq \delta} \log Df(x) d \mu_{\infty}.
\end{align*}
If the conclusion is not true, then there exists a constant $\gamma>0$ such that 
\[
\limsup_{\epsilon \to 0+} \int \chi^+(x; \theta_{\epsilon}) d \mu_{\epsilon}(x) <  \int \log Df(x) d \mu_{\infty} -\gamma.
\]
Therefore,
\[
 \limsup_{\epsilon \to 0+} \int_{\epsilon^2 < |x-c| < \delta} \log Df(x) d \mu_{\epsilon}  < \int_{|x-c| < \delta} \log Df(x) d \mu_{\infty} -\gamma.
\]
By Proposition 2, $\log Df(x)$ is integrable, then $\lim_{\delta \to 0} \int_{|x-c| < \delta} \log Df(x) d \mu_{\infty}  =0$. So when $\delta \to 0$, we get a contradiction to Corollary 4.4.
\end{proof}

\subsection{Proof of Theorem 3}

In this subsection we will prove Theorem 3. 

\begin{lem}
Let $f \in \mathcal L_B$ and $\nu$ be an ergodic $f$-invariant probability measure with $\chi^+(x) := \max \{\overline\chi_f(x), 0 \} =0$ for $\nu$-a.e. $x \in [0, 1]$. Then $\nu$ is the unique physical measure $\mu$ of $f$.
\end{lem}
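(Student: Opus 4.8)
The plan is to show that $\nu$ gives full measure to the solenoid attractor $\mathcal O_f$, after which unique ergodicity of $f|\mathcal O_f$ forces $\nu=\mu$.

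First I would record the relevant invariance. Using the renormalization structure ($f$ maps $\overline{f^i(C_n^{\pm})}$ onto $\overline{f^{i+1}(C_n^{\pm})}$ for $1\le i<S_n^{\pm}$, maps $\overline{C_n^{\pm}}$ onto $\overline{f(C_n^{\pm})}$, and $f^{S_n^{\pm}}(C_n^{\pm})\subset C_n\subset\Lambda_n$), each level cycle $\Lambda_n$ is forward invariant, hence so is $\mathcal O_f=\bigcap_n\Lambda_n$. Since $\nu$ is ergodic and these sets are forward invariant (so invariant mod $\nu$), $\nu(\Lambda_n)\in\{0,1\}$ and $\nu(\mathcal O_f)\in\{0,1\}$. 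If $\nu(\mathcal O_f)=1$, then $\nu$ is an $f$-invariant probability supported on $\mathcal O_f$; since $f|\mathcal O_f$ is uniquely ergodic, $\nu$ equals the unique physical measure $\mu$, which is the assertion. So everything reduces to excluding $\nu(\mathcal O_f)=0$.

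Assume $\nu(\mathcal O_f)=0$. Since $\Lambda_n\downarrow\mathcal O_f$ we have $\nu(\Lambda_n)\to0$, and since each value lies in $\{0,1\}$ there is $n_0$ with $\nu(\Lambda_{n_0})=0$; hence $\mathrm{supp}(\nu)\subset\overline{[0,1]\setminus\Lambda_{n_0}}=:Y$. As $c$ lies in the interior of $C_{n_0}\subset\Lambda_{n_0}$, the compact set $Y$ — and a fortiori $\mathrm{supp}(\nu)$ — is at positive distance from the singular point $c$, and $\mathrm{supp}(\nu)$ is forward invariant under $f$ (in fact disjoint from $\mathcal O_f$, since by minimality any point of $\mathcal O_f$ in $\mathrm{supp}(\nu)$ would force $\mathcal O_f\subset\mathrm{supp}(\nu)\subset Y\not\ni c$). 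Thus $f$ restricts to a $C^3$ local diffeomorphism with negative Schwarzian derivative on a neighbourhood of $\mathrm{supp}(\nu)$ that contains no critical point. Recalling that $f$ has no non-repelling periodic orbit — an attracting one would attract a critical value by Singer's theorem and $Sf\le0$, impossible since $c_1^{\pm}\in\mathcal O_f$, while a parabolic one would have an attracting basin of positive Lebesgue measure disjoint from $B(\mathcal O_f)$, contradicting $\mathrm{Leb}(B(\mathcal O_f))=1$; in particular $Df(0),Df(1)>1$ — every periodic orbit in $\mathrm{supp}(\nu)$ is hyperbolic repelling. By Ma\~n\'e's hyperbolicity theorem (see \cite{MS}), applied to a $C^3$ endomorphism agreeing with $f$ near $\mathrm{supp}(\nu)$, which is permissible because $\mathrm{supp}(\nu)$ together with all its forward iterates stays in $Y$, away from $c$, the set $\mathrm{supp}(\nu)$ is uniformly expanding: there are $C>0$, $\lambda>1$ with $|Df^n(x)|\ge C\lambda^n$ for all $x\in\mathrm{supp}(\nu)$ and $n\ge1$. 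Hence $\overline\chi_f(x)\ge\log\lambda>0$ for every $x\in\mathrm{supp}(\nu)$, so $\chi^+(x)\ge\log\lambda>0$ for $\nu$-a.e. $x$, contradicting the hypothesis that $\chi^+=0$ $\nu$-a.e. Therefore $\nu(\mathcal O_f)=1$ and $\nu=\mu$.

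The main obstacle is the hyperbolicity input at the end: verifying that $f$ has no non-repelling periodic orbit — the boundary fixed points $0,1$ being the delicate case, handled via $\mathrm{Leb}(B(\mathcal O_f))=1$ — and justifying Ma\~n\'e's theorem for the discontinuous Lorenz map, both of which rely on the preceding step trapping $\mathrm{supp}(\nu)$ and its forward orbit in a compact set bounded away from $c$.
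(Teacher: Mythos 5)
Your argument is correct, but it is arranged quite differently from the paper's proof. The paper applies Ma\~n\'e's theorem once and for all to the fixed set $[0,1]\setminus(B(\mathcal O_f)\cup\{c\})$: this set is uniformly expanding, so the hypothesis $\chi^+=0$ $\nu$-a.e.\ forces $\mathrm{supp}(\nu)\subset B(\mathcal O_f)\cup\{c\}$, and it then finishes topologically --- ergodicity makes $f|\mathrm{supp}(\nu)$ transitive, hence onto, while $\omega(y)=\mathcal O_f$ for every $y$ in the basin, so a point of $\mathrm{supp}(\nu)\setminus\mathcal O_f$ could not have a preimage in the support; unique ergodicity of $f|\mathcal O_f$ then gives $\nu=\mu$. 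You instead run a $0$--$1$ dichotomy on $\nu(\mathcal O_f)$ using the forward-invariant nested cycles $\Lambda_n$, and in the case $\nu(\mathcal O_f)=0$ you apply Ma\~n\'e's theorem directly to $\mathrm{supp}(\nu)$, which is then trapped at positive distance from $c$ together with its forward orbit, so $f$ may be replaced near $c$ by a globally smooth map; uniform expansion on $\mathrm{supp}(\nu)$ contradicts $\chi^+=0$ $\nu$-a.e. Both proofs rest on the same two inputs --- Ma\~n\'e-type hyperbolicity and the absence of non-repelling periodic orbits (negative Schwarzian plus $\mathrm{Leb}(B(\mathcal O_f))=1$) --- but your arrangement buys a cleaner endgame: it avoids the paper's rather terse surjectivity/$\omega$-limit step and the explicit use of the basin decomposition, at the price of verifying Ma\~n\'e's hypotheses on $\mathrm{supp}(\nu)$ itself and the (harmless) smoothing-near-$c$ technicality.

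One small caveat, which you partly flagged yourself: your claim that a parabolic periodic orbit always attracts a set of positive Lebesgue measure is solid for interior orbits (with $Sf<0$ a neutral cycle cannot repel from both sides), but a neutral fixed point at $0$ or $1$ could in principle repel from the inside and attract nothing, in which case it is a non-hyperbolic periodic point that your basin argument does not exclude (and $\delta_0$ would then even threaten the statement itself). This is not a defect specific to your proof: the paper's own application of Ma\~n\'e's theorem to $[0,1]\setminus(B(\mathcal O_f)\cup\{c\})$, a set containing $0$ and $1$, needs exactly the same non-degeneracy, and in the sections where the lemma is used the paper works under the standing assumption $f(I)\subset\mathrm{int}(I)$, which removes boundary fixed points altogether. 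So this is an implicit hypothesis shared with the paper rather than a gap in your argument.
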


\begin{proof}

Since $f|\mathcal O_f$ is uniquely ergodic, it suffices to show that ${\rm supp}(\nu) \subset \mathcal O_f$. Let $B_f$ denote the attracting basin of the global attractor $\mathcal O_f$. Then ${\rm Leb}(B_f)= 1$ and  $[0, 1]\setminus (B_f \cup \{ c\})$ is a hyperbolic set with 0 Lebesgue measure by Ma{$\tilde {\rm n}$}\'e's Theorem\footnote{Ma{$\tilde {\rm n}$}\'e's theorem is still valid in the context of contracting Lorenz maps.} \cite{MS}[Chapter III, Theorem 5.1]. Note that ${\rm Leb}( B_f \setminus O_f^-(c))= 1$.

Let $\nu$ be an ergodic $f$-invariant probability measure with $\chi^+(x)=0$ for $\nu$-a.e. $x \in [0, 1]$. Clearly ${\rm supp}(\nu)$ is a non-empty closed invariant set contained in $B_f \cup \{ c\}$. We argue by contradiction, assume that there exists $x_0$ such that $x_0 \in {\rm supp}(\nu) \setminus \mathcal O_f$. Since $\nu$ is ergodic, $f|{\rm supp}(\nu)$ is transitive. Since $\omega(y) = \mathcal O_f$ for any $y \in {\rm supp}(\nu)$, $x_0$ cannot be the image of any $y \in {\rm supp}(\nu)$. Thus $f|{\rm supp}(\nu)$ is not a surjection, a contradiction since transitive maps are always onto. This finishes the proof.

\end{proof}

\begin{proof}[{\bf Proof of Theorem 3}]
Let $f \in \mathcal L_B$. For any $\epsilon>0$ small enough, there exists a unique stationary measure $\mu_{\epsilon}$ which is ergodic. Suppose that $\mu_{\epsilon}$ converges to a measure $\mu_{\infty}$ in the weak star topology. Then $\mu_{\infty}$ is an $f$-invariant probability measure. By Theorem 4, $h_{\mu_{\infty}} (f) = \int \chi^+(x) d \mu_{\infty}(x)$.

Let 
\[
\mu_{\infty} = \int \nu d \lambda(\nu)
\]
be the ergodic decomposition of $\mu_{\infty}$. Then 
\[
h_{\mu_{\infty}}(f) = \int h_{\nu}(f) d \lambda(\nu) = \int \left( \int \chi^+(x) d \nu(x) \right) d \lambda(\nu).
\]
By Ruelle inequality, $h_{\nu}(f) \leq \int \chi^+(x) d \nu(x)$. It follows that for each ergodic component $\nu$ of $\mu_{\infty}$, $h_{\nu}(f) = \int \chi^+(x) d \nu(x)$.

It is proved by Ledrappier \cite{L} that for $C^2$ piecewise interval maps, an ergodic invariant measure of positive entropy is absolutely continuous if and only if it satisfies the Margulis-Pesin formula. Since $f$ has no absolutely continuous invariant measure, $h_{\nu}(f)=0$ for each ergodic component $\nu$ of $\mu_{\infty}$. Since $\chi^+(x) $ is non-negative, it follows that $\chi^+(x)=0$ for $\nu$-a.e. $x$. By Lemma 4.5, $\nu= \mu$, hence $\mu_{\infty} = \mu$. This shows that $f$ is stochastically stable.

\end{proof}

\subsection*{Conflict of interest}

The authors declared no potential conﬂicts of interest with respect to the research.

\subsection*{Data availability statement}

No datasets were generated or analysed during the current study.

\subsection*{Acknowledgements}
The authors would like to thank the anonymous referee for comments that improved the presentation. H. Ji was supported by NSFC Grant No.12301103. Q. Wang was supported by the Natural Science Research Project in Universities of Anhui Province under Grant No.2023AH050105.


%
%



{\textsc{ School of Mathematics and Statistics,
Zhengzhou University, Zhengzhou,
450001,  CHINA}} (e-mail:jihymath@zzu.edu.cn)

{\textsc{ School of Mathematical Sciences, Anhui University, Hefei, 
230601,  CHINA}} (e-mail:qihan@ahu.edu.cn)

\end{document}